\theoremstyle{plain}
\newtheorem{thm}{Theorem}[section]
\newtheorem{prop}[thm]{Proposition}
\newtheorem{cor}[thm]{Corollary}
\newtheorem{lem}[thm]{Lemma}
\newtheorem{classification-thm}[thm]{Classification Theorem}
\newenvironment{proofmain}{\textbf{Proof of Theorem \ref{mainthmofourobstructiontheory}}:}{}
\theoremstyle{definition}
\newtheorem{remark}[thm]{Remark}
\newtheorem{defin}[thm]{Definition}
\begin{document}

\title{\bf Obstructions for constructing equivariant fibrations}
\author{Asl\i \ G\"{u}\c{c}l\"{u}kan \.Ilhan}

\address{Asli G\"u\c cl\" ukan \.Ilhan\\
Department of Mathematics \\
Bilkent University \\
Bilkent, Ankara, Turkey. }

\email{guclukan@fen.bilkent.edu.tr} \subjclass[2000]{Primary:
57S25; Secondary: 55R91}
\thanks{The author is supported by T\"UB\.ITAK-TBAG/110T712.}

\begin{abstract}
Let $G$ be a finite group and $\mathcal{H}$ be a family of
subgroups of $G$ which is closed under conjugation and taking
subgroups. Let $B$ be a $G$-CW-complex whose isotropy subgroups
are in $\mathcal{H}$ and let $\mathcal{F}= \{F_H\}_{H \in
\mathcal{H}}$ be a compatible family of $H$-spaces (see definition
\ref{def:compatible}). A $G$-fibration over $B$ with the fiber
type $\mathcal{F}= \{F_H\}_{H \in \mathcal{H}}$ is a
$G$-equivariant fibration $p:E \rightarrow B$ where $p^{-1}(b)$ is
$G_b$-homotopy equivalent to $F_{G_b}$ for each $b \in B$. In this
paper, we develop an obstruction theory for constructing
$G$-fibrations with the fiber type $\mathcal{F} $ over a given
$G$-CW-complex $B$. Constructing $G$-fibrations with a prescribed
fiber type $\mathcal{F}$ is an important step in the construction
of free $G$-actions on finite CW-complexes which are homotopy
equivalent to a product of spheres.
\end{abstract}
\maketitle
\section{Introduction} \label{sect:introductions}
In 1925, Hopf stated a problem which was later called the
topological spherical space form problem: Classify all finite
groups which can act freely on a sphere $\mathbb{S}^n$, $n
>1$. One variant of this problem was solved by Swan
\cite{Swan}. He proved that a finite group acts freely on a finite
complex homotopy equivalent to a sphere if and only if it has
periodic cohomology. By using Swan's construction and surgery
theory, the topological spherical space form problem has been
completely solved by Madsen-Thomas-Wall \cite{Madsen}. It turns
out that a finite group $G$ acts freely on a sphere if and only if
$G$ has periodic cohomology and any element of order $2$ in $G$ is
central (see \cite[Theorem 0.5]{Madsen}).

One of the generalizations of this problem is to classify all
finite groups which can act freely on a finite CW-complex homotopy
equivalent to a product of $k$-spheres $\mathbb{S}^{n_1} \times
\cdots \times \mathbb{S}^{n_k}$ for some $n_1, \dots, n_k$.
Recently, Adem and Smith \cite{Main} gave a homotopy theoretic
characterization of cohomological periodicity and as a corollary
they obtained a tool to construct free group actions on
CW-complexes homotopy equivalent to a product of spheres. More
precisely, they have shown that a connected CW-complex $X$ has
periodic cohomology if and only if there is a spherical fibration
over $X$ with a total space $E$ that has a homotopy type of a
finite dimensional CW-complex. As a consequence they proved that
if $G$ is a finite group and $X$ is a finite dimensional
$G$-CW-complex whose isotropy subgroups all have periodic
cohomology, then there is a finite dimensional CW-complex $Y$ with
a free $G$-action such that $Y \simeq \mathbb{S}^n \times X$. As
remarked in \cite{Main}, the second result can also be obtained
using the techniques given by Connolly and Prassidis in
\cite{Conolly-Prassidis}. More recently, Klaus \cite{Klaus} proved
that every $p$-group of rank $3$ acts freely on a finite
CW-complex homotopy equivalent to a product of three spheres using
similar techniques.

The method used by Connolly and Prassidis \cite{Conolly-Prassidis}
is to construct a spherical fibration inductively over the skeleta
by dealing with cells in each dimension separately. This is a
standard strategy in obstruction theory. Note that if there is an
orientable $G$-spherical fibration over the $n$-skeleton of a
CW-complex, then its restriction to the boundary of each
$(n+1)$-cell $\sigma$ will be an orientable $G_{\sigma}$-fibration
with the fiber $F_{G_{\sigma}}$ where $G_{\sigma}$ is the isotropy
subgroup of $\sigma$. Associated to this $G_{\sigma}$-fibration
over $\partial \sigma$, there is a classifying map from $\partial
\sigma$ to the space $B \mathrm{Aut_{G_{\sigma}}}F_{G_{\sigma}}$
where $\mathrm{Aut}_{G_{\sigma}}F_{G_{\sigma}}$ is the topological
monoid of self $G_{\sigma}$-homotopy equivalences of
$F_{G_{\sigma}}$. Combining the attaching map of $\sigma$ with the
classifying map gives us an element in the $n$-th homotopy group
of $B \mathrm{Aut_{G_{\sigma}}}F_{G_{\sigma}}$. Therefore we
obtain a cellular cochain which assigns a homotopy class in
$\pi_n(B \mathrm{Aut_{G_{\sigma}}}F_{G_{\sigma}})$ to each
$(n+1)$-cell. This cochain vanishes if and only if the
$G$-fibration over the $n$-skeleton extends to a $G$-fibration
over the $(n+1)$-skeleton. In the situation where
Connolly-Prassidis consider, this cochain can be killed by taking
fiber joins. Using this idea, \"{U}nl\"{u} \cite{Unlu-thesis}
gives a concrete cell-by-cell construction of $G$-spherical
fibrations in his thesis.

In obstruction theory, one often has obstructions as cohomology
classes which tells when a construction can be performed on the
next skeleton after some modifications. In other words, the
cohomological obstruction class vanishes if and only if the
restriction of the construction to the $(n-1)$-skeleton extends
over the $(n+1)$-skeleton. Having a cohomological obstruction is
better than having a cochain class as an obstruction since a
cohomology class is more likely to be zero. Note that if $p:E
\rightarrow B$ is a $G$-fibration and $b\in B^H$ then the fiber
$p^{-1}(b)$ is an $H$-space. When $B^H$ is connected for $H \leq
G$, there is an $H$-space $F_H$ such that for every $b \in B^H$,
the fiber $p^{-1}(b)$ is $H$-homotopy equivalent to $F_H$.
Moreover, if $B^H$ is connected for every $H \leq G$, the family
of $H$-spaces $F_H$ forms a compatible family (see
\ref{def:compatible} for a definition). In this case, the
$G$-fibration $p: E \rightarrow B$ is said to have the fiber type
$\{F_{H}\}$. In this paper, we notice that the cohomological
obstructions for constructing $G$-fibrations with a given fiber
type live in Bredon cohomology of $B$ with coefficients in
$\underline{\pi}_{n, \mathcal{F}}$ (see page \pageref{local coef.}
for the definition) and we prove the following theorem.
\begin{thm}\label{mainthmofourobstructiontheory} Let $G$ be a
finite group and
 $\mathcal{H}$ be a family of subgroups of $G$ which is closed
under conjugation and taking subgroups. Let $B$ be a
$G$-CW-complex whose isotropy subgroups are in $\mathcal{H}$ such
that $B^H$ is simply connected for every $H \in \mathrm{Iso}(B)$.
Let $\mathcal{F}= \{F_H\}_{H \in \mathcal{H}}$ be a compatible
family of finite $H$-CW-complexes and $p:E_n \rightarrow B^n$ be a
$G$-fibration over the $n$-skeleton of $B$ with the fiber type
$\{F_H\}_{H \in \mathcal{H}}$ where $n \geq 2$.
\begin{enumerate}
    \item There is a cocycle $\alpha_{p} \in C_{\mathcal{H}}^{n+1}(B;
\underline{\pi}_{n, \mathcal{F}})$ which vanishes if and only if
$p$ extends to a $G$-fibration over $B^{n+1}$ with a total space
$G$-homotopy equivalent to a $G$-CW-complex.
    \item The cohomology class $[\alpha_{p}] \in
H^{n+1}_{G, \mathcal{H}}(B; \underline{\pi}_{n, \mathcal{F}})$
vanishes if and only if the $G$-fibration $p|_{B^{n-1}}:
p^{-1}(B^{n-1}) \rightarrow B^{n-1}$ extends to a $G$-fibration
over $B^{n+1}$ with a total space $G$-homotopy equivalent to a
$G$-CW-complex.
\end{enumerate}

Moreover if $B$ is a finite $G$-CW-complex then the total space of
the obtained fibration has the $G$-homotopy type of a finite
$G$-CW-complex whenever $E_n$ has the $G$-homotopy type of a
finite $G$-CW-complex.
\end{thm}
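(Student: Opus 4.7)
The plan is to adapt the classical cell-by-cell obstruction strategy of \"Unl\"u and of Connolly--Prassidis for spherical fibrations to arbitrary compatible families $\mathcal{F}$, so that the coefficient system becomes the Bredon system $\underline{\pi}_{n,\mathcal{F}}$ and the obstruction class lives naturally in Bredon cohomology. For each $(n+1)$-cell $\sigma$ of $B$ of orbit type $G/H$, with attaching map $\varphi\colon G/H \times S^n \to B^n$, the pullback $\varphi^*(p)$ is equivalent (by restricting to $\{eH\}\times S^n$) to an $H$-fibration over $S^n$ with fiber $H$-homotopy equivalent to $F_H$. The hypothesis that $B^H$ is simply connected is used precisely here: it ensures that this $H$-fibration is classified by an unambiguous homotopy class $\alpha_p(\sigma) \in \pi_n(B\mathrm{Aut}_H(F_H)) = \underline{\pi}_{n,\mathcal{F}}(G/H)$. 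Compatibility of $\mathcal{F}$ together with the $G$-equivariance of $p$ ensures that $\sigma \mapsto \alpha_p(\sigma)$ defines a cochain $\alpha_p \in C^{n+1}_{\mathcal{H}}(B;\underline{\pi}_{n,\mathcal{F}})$, and the cocycle identity $\delta\alpha_p=0$ follows from a standard boundary argument, since $\alpha_p$ measures the failure of an extension that already exists on the attaching sphere of any $(n+2)$-cell.

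Part (1) then follows essentially tautologically: $\alpha_p(\sigma)=0$ is equivalent to the pulled-back $G_\sigma$-fibration over $\partial\sigma$ being $G_\sigma$-trivializable, and any such trivialization lets us extend $p$ across $\sigma$ via the mapping cylinder of the trivializing $G_\sigma$-equivalence. Performing this extension simultaneously on one representative of each orbit of $(n+1)$-cells and then translating by $G$ yields the required $G$-fibration $E_{n+1} \to B^{n+1}$. The total space inherits a $G$-CW-structure by attaching equivariant cells of the form $G\times_H (D^{n+1} \times F_H)$ along $G$-equivalences, and the construction visibly preserves finiteness when $E_n$ and each $F_H$ are finite.

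For part (2) I would introduce a difference cochain. Given two $G$-fibrations $p, p'$ over $B^n$ of the same fiber type that agree on $B^{n-1}$, on each $n$-cell $\sigma$ they glue along $\partial\sigma$ into a $G_\sigma$-fibration over $S^n$ whose classifying class $d(p,p')(\sigma)\in \underline{\pi}_{n,\mathcal{F}}(G/G_\sigma)$ assembles into $d(p,p')\in C^n_{\mathcal{H}}(B;\underline{\pi}_{n,\mathcal{F}})$. The core technical step is the coboundary identity $\alpha_{p'}-\alpha_p = \delta\,d(p,p')$, complemented by the realization statement that every $n$-cochain arises as some $d(p,p')$ by re-gluing the fiber over each $n$-cell using a self-$G_\sigma$-equivalence of $F_{G_\sigma}$ representing the prescribed class. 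Granting these, $[\alpha_p]=0$ produces a $p'$ agreeing with $p$ on $B^{n-1}$ and satisfying $\alpha_{p'}=0$, and part (1) applied to $p'$ finishes the proof.

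The main obstacle I anticipate is the coboundary identity in the equivariant setting: one must track how re-gluing on an $n$-cell $\sigma$ affects the obstructions on incident $(n+1)$-cells whose isotropy is a proper subgroup of $G_\sigma$, and verify that the result is compatible with the restriction morphisms built into the coefficient system $\underline{\pi}_{n,\mathcal{F}}$. Once this identity and the realization of arbitrary $n$-cochains as difference cochains are in hand, the theorem follows by standard obstruction-theoretic bookkeeping.
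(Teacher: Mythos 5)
Your overall skeleton (obstruction cocycle from classifying maps, cocycle identity, difference cochain, coboundary formula, realization of an arbitrary cochain) matches the paper's architecture, but there is a genuine gap at the step you treat as routine: the gluing. When $\alpha_p(\sigma)=0$ you only get a $G_\sigma$-fiber homotopy equivalence $\beta\colon \partial\sigma\times F_{G_\sigma}\to p^{-1}(\partial\sigma)$, and attaching a trivial fibration over the disk to $E_n$ along $\beta$ (or along its mapping cylinder) does not obviously yield a fibration: the mapping cylinder of $\beta$ is a $G_\sigma$-fibration over $\partial\sigma$, not over the collar $\partial\sigma\times I$ that you need in order to interpolate between the trivial fibration over $\sigma$ and $p$ over $B^n$, and the naive adjunction of total spaces produces in general only a quasifibration (this is exactly the issue with the Connolly--Prassidis construction that the paper points out). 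The paper's solution, and its main technical content, is an equivariant version of Tulley's theorem: any two $G$-fiber homotopy equivalent $G$-fibrations over $B$ are the two ends of a $G$-fibration over $B\times I$ (a ``$G$-tube''), proved via the mapping-cylinder fibration together with two nontrivial propositions showing that suitable subspaces of $M_f\times I$ fiber over $B\times I$ (the continuity check at $t=0$ is delicate), and then Dold's local criterion to assemble the pieces. Without this tube machinery your extension step in part (1), and equally your construction of the interpolating object needed to even define the difference cochain in part (2), is unsupported.

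The same gap reappears, amplified, in your realization step. An element of $\pi_n(B\mathrm{Aut}^I_{G_\sigma}F_{G_\sigma})$ is not a single self-$G_\sigma$-equivalence of $F_{G_\sigma}$ but (up to adjunction) a clutching map $\mathbb{S}^{n-1}\to \mathrm{Aut}^I_{G_\sigma}F_{G_\sigma}$, and ``re-gluing the fiber over each $n$-cell'' by it must produce an honest $G$-fibration over $B^n$ that agrees with $p$ on $B^{n-1}$, together with a $G$-fibration over $(B\times I)^n$ joining it to $p$ so that the difference cochain is defined and equals the prescribed $d$. The paper does this by an explicit construction: pull $p$ back over $\mathbb{D}^n\times\{0\}\cup\mathbb{S}^{n-1}\times I$, subdivide the top disk into three annular regions, insert the pullback of the universal fibration along a representative of $d(G_\tau)(\tau)$ over the inner disk, a trivial fibration over the middle annulus, and a reparametrized copy of $p$ over the outer annulus, and then glue — again using the tube/local-criterion machinery — before verifying the classifying-map computation that identifies the resulting difference cochain with $d$. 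Finally, your claim that the new total space ``inherits a $G$-CW-structure'' by attaching cells along $G$-equivalences is too strong: attachment along homotopy equivalences does not give a CW structure, and the paper only establishes (by an explicit deformation) that the total space is $G$-homotopy equivalent to a (finite, when $E_n$ and $B$ are finite) $G$-CW-complex, which is all the theorem asserts.
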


To prove this theorem we first define an obstruction cochain in
the chain complex of Bredon cohomology and show that it is a
cocycle. We call this cocycle an obstruction cocycle. Then we show
that the difference of obstruction coycles of any two extensions
of the $G$-fibration $p|_{B^{n-1}}$ is the coboundary of a cochain
called the difference cochain. If there is an extension of
$p|_{B^{n-1}}$ to a $G$-fibration over $B^{n+1}$, then the
obstruction cocycle of the restriction of this extension to $B^n$
vanishes. This means that the obstruction cocycle of $p$ is a
coboundary and represents a cohomology class which vanishes. This
proves the ``if" direction of the above theorem.

For the ``only if" direction it suffices to show that for every
cochain $d$ there is a $G$-fibration $q$ over $B^n$ with
$q|_{B^{n-1}}= p|_{B^{n-1}}$ such that $d$ is the difference
cochain
 of the extensions $p$ and $q$ of $p|_{B^{n-1}}$.
Here the most technical part is the construction of
 a $G$-fibration $q$ with these properties. That is because it is not clear how to glue $G$-fibration $p|_{B^{n-1}}$
 to $G$-fibrations over the $n$-cells corresponding to the cochain $d$. For quasifibrations it
 suffices to take the adjunction of the total
 spaces to glue two quasifibrations over different base. However, in order to obtain a fibration
 one needs to put some $G$-tubes between
total spaces of these $G$-fibrations
 to create enough space to deal with $G$-homotopies. We use a
 generalization of a result due to Tulley \cite{Tulley} to produce
 a $G$-fibration $q$ with the required properties.

The paper is organized as follows: Section \ref{sect:equivariant
fibrations} contains definitions and preliminary results on
equivariant fibrations. In Section \ref{sect:Tulley'stheorem}, we
give a method to glue $G$-fibrations over different base spaces by
generalizing a construction due to Tulley \cite{Tulley}. Finally,
we prove Theorem \ref{mainthmofourobstructiontheory} in Section
\ref{sect:constructing G-fibrations}.
\section{Equivariant fibrations}
\label{sect:equivariant fibrations} In this section, we give the
basic definitions of the equivariant fibration theory.
We refer the reader to \cite{Luck} and \cite{Waner} for more
details.

\begin{defin} Let $G$ be a finite group. A $G$-map $p: E \rightarrow B$
is called a $G$-fibration if it has the $G$-homotopy lifting
property with respect to every $G$-space $X$, that is, given a
commutative diagram of $G$-maps
\begin{center}
\begin{tikzpicture}[node distance=2cm, auto]
 \node (P) {$X \times \{0\}$};
  \node (E) [right of=P] {$E$};
  \node (A) [below of=P] {$X\times I$};
  \node (B) [below of=E] {$B$};
  \draw[->] (P) to node {$h$} (E);
  \draw[right hook->] (P) to node [swap] {} (A);
  \draw[->] (A) to node [swap] {$H$} (B);
  \draw[->] (E) to node {$p$} (B);
\draw[->, dashed] (A) to node {$\widetilde{H}$} (E);
\end{tikzpicture}
\end{center}
there exists a $G$-map $\widetilde{H}: X \times I \rightarrow E$
such that $p\widetilde{H}= H$ and $\widetilde{H}|_{X
\times\{0\}}=h$.
\end{defin}

Equivalently, a $G$-map $p:E \rightarrow B$ is a $G$-fibration if
there is a $G$-map $$\lambda: \Omega_p=\{(e,\omega) \in E \times
B^{I}| \ p(e)= \omega(0)\} \rightarrow E^I$$ such that $\lambda(e,
\omega)(0)=e$ and $p\lambda(e,\omega)=\omega$. The $G$-map
$\lambda$ is called a {\it $G$-lifting function}. By using a
similar definition, Dold \cite{Dold} proved that being a fibration
is a local property. The same proof applies to the equivariant
case.

\begin{defin}A covering $\mathcal{U}$ of $G$-invariant open sets of $B$ is
called {\it numerable $G$-covering} if $\mathcal{U}$ is locally
finite and there is a $G$-map $f_U:B \rightarrow I$ such that
$U=f_U^{-1}(0,1]$ for every $U\in \mathcal{U}$.
\end{defin}


\begin{thm} \label{uniformization theorem} A $G$-map $p:E \rightarrow B$
is a $G$-fibration if there is a numerable $G$-covering
$\mathcal{U}$ of $B$ such that $p|_U: p^{-1}(U) \rightarrow U$ is
a $G$-fibration for each $U \in \mathcal{U}$.
\end{thm}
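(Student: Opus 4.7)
The plan is to mimic Dold's classical proof \cite{Dold} that being a fibration is a local property, verifying at each step that the constructions respect the $G$-action. The equivariant version is essentially a formal corollary of Dold's argument together with the observation that the patching data supplied by a numerable $G$-covering is automatically $G$-invariant.

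First, I would note that since each $f_U \colon B \to I$ is a $G$-map and $G$ acts trivially on $I$, every $f_U$ is $G$-invariant, and hence any function manufactured from the $f_U$'s by normalization, taking pointwise maxima, or truncation (the standard tools for building partitions of unity from a numerable covering) is again $G$-invariant. In particular, the standard reduction from a numerable covering to a numerable covering with a genuine locally finite partition of unity $\{\tau_U\}$ subordinate to $\mathcal{U}$ can be performed inside the category of $G$-invariant functions.

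Next, for each $U \in \mathcal{U}$ fix a $G$-lifting function $\lambda_U \colon \Omega_{p|_U} \to (p^{-1}(U))^I$. The goal is to build a global $G$-lifting function $\lambda \colon \Omega_p \to E^I$. Following Dold, I would well-order $\mathcal{U}$, and given $(e,\omega)\in \Omega_p$, use the values of $\tau_U \circ \omega$ to subdivide $I$ into finitely many subintervals on each of which $\omega$ is contained in a single $U$ (which is possible by local finiteness), and then lift $\omega$ piecewise using the corresponding $\lambda_U$'s, concatenating the pieces so that the endpoint of one piece is the starting point of the next. The resulting $\lambda(e,\omega)$ is continuous in $(e,\omega)$ by Dold's argument, which is unaffected by the presence of the $G$-action.

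The main point to check is equivariance of the global $\lambda$, and this is where most of the care is needed. Observe that $G$ acts on $\Omega_p$ via $g\cdot(e,\omega)=(ge, g\omega)$ where $(g\omega)(t)=g\cdot\omega(t)$, and on $E^I$ by postcomposition. Because each $\tau_U$ is $G$-invariant, we have $\tau_U\circ(g\omega)=\tau_U\circ\omega$, so the subdivision of $I$ associated to $(e,\omega)$ coincides with the one associated to $g\cdot(e,\omega)$; and because each $\lambda_U$ is a $G$-map, lifting $g\omega$ starting at $ge$ with $\lambda_U$ produces the translate by $g$ of the lift of $\omega$ starting at $e$. Concatenating the translated pieces then shows $\lambda(g\cdot(e,\omega))=g\cdot\lambda(e,\omega)$, so $\lambda$ is a $G$-lifting function and $p$ is a $G$-fibration. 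The only technical obstacle worth flagging is keeping track of basepoints during the inductive concatenation, but this is the same bookkeeping as in the nonequivariant case.
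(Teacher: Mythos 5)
Your proposal is correct and follows essentially the same route as the paper, which gives no independent argument for this theorem but simply observes (citing Dold) that the nonequivariant proof of locality of the fibration property carries over verbatim once one notes that the $f_U$'s, being $G$-maps into $I$ with trivial action, are $G$-invariant and that the local $G$-lifting functions and the patching constructions respect the action. Your equivariance checks are exactly the point being implicitly invoked, with the continuity issues deferred to Dold's argument just as in the paper.
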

The notion of an equivalence between $G$-fibrations is defined
naturally as follows: Let $p_i:E_i \rightarrow B$ be a
$G$-fibration for $i=1,2$. A fiber preserving $G$-map $f:E_1
\rightarrow E_2$ is called a {\it $G$-fiber homotopy equivalence}
if there is a fiber preserving $G$-map $g:E_2 \rightarrow E_1$
such that the compositions $fg$ and $gf$ are $G$-homotopy
equivalent to identity maps through $G$-homotopies which are fiber
preserving at each time $t\in I$. In this case, we write $p_1
\simeq_G p_2$. As in the non-equivariant case, a fiber preserving
$G$-homotopy equivalence between $G$-fibrations is a $G$-fiber
homotopy equivalence (see \cite[pg. 50]{May} for the proof of the
non-equivariant case).

In \cite{Stasheff}, Stasheff proved a classification theorem for
non-equivariant fibrations up to fiber homotopy equivalences. When
a $G$-fibration is over a path-connected space with trivial
$G$-action, the fiber at each point in the base has a natural
$G$-space structure and all fibers are $G$-homotopy equivalent
with respect to this structure. In this case, the theory of
$G$-fibrations is essentially the same as the non-equivariant one
and we have the following classification theorem.

\begin{thm}\label{classification theorem}Let $\mathrm{Aut}_G(F_G)$
be the monoid of $G$-equivariant self homotopy equivalences of a
finite $G$-CW-complex $F_G$. If $B$ is a CW-complex with trivial
$G$-action then there is a one-to-one correspondence between the
set of $G$-fiber homotopy equivalence classes of $G$-fibrations
$p:X \rightarrow B$ with fibers having the $G$-homotopy type of
$F_G$ and the set of homotopy classes of maps $B \rightarrow B
\mathrm{Aut}_G(F_G)$. The equivalence is obtained by taking the
pullback fibration from the universal fibration over
$B\mathrm{Aut}_G(F_G)$.
\end{thm}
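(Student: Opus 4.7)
The plan is to mimic Stasheff's classical classification of Hurewicz fibrations, replacing the topological monoid $\mathrm{Aut}(F)$ of self-homotopy equivalences of an ordinary fiber by the equivariant monoid $M = \mathrm{Aut}_G(F_G)$. Since the $G$-action on $B$ is trivial, every fiber $p^{-1}(b)$ is automatically $G$-invariant and the $G$-action on the total space preserves fibers, so a $G$-fibration over $B$ with fibers of $G$-homotopy type $F_G$ is nothing but a Hurewicz fibration enriched with a compatible fiberwise $G$-action. In particular, the structure monoid of fiberwise self-equivalences reduces from $\mathrm{Aut}(F_G)$ to $M$.

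First, I would construct the universal $G$-fibration. Apply Milnor's infinite join construction (or, equivalently, the bar construction) to the grouplike topological monoid $M$ to produce a principal quasifibration $EM \to BM = B\mathrm{Aut}_G(F_G)$, and then form the associated fiber bundle
\[
  EM \times_M F_G \longrightarrow B\mathrm{Aut}_G(F_G),
\]
where $G$ acts on the total space only through its action on $F_G$ (and trivially on $EM$ and on $BM$). Local triviality, together with Theorem \ref{uniformization theorem}, shows that this is indeed a $G$-fibration, and by construction each fiber is $G$-homeomorphic to $F_G$. Pulling this universal $G$-fibration back along any map $f: B \to B\mathrm{Aut}_G(F_G)$ yields a $G$-fibration over $B$ with fibers of $G$-homotopy type $F_G$, and the equivariant homotopy lifting property forces homotopic maps to give $G$-fiber homotopy equivalent pullbacks. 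This defines the classifying map from $[B, B\mathrm{Aut}_G(F_G)]$ to the set of $G$-fiber homotopy equivalence classes.

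Next, I would establish that this classifying map is a bijection. For surjectivity, given any $G$-fibration $p: X \to B$ with fibers $G$-homotopy equivalent to $F_G$, form the space $\Gamma(p)$ whose fiber over $b \in B$ is the space of $G$-homotopy equivalences $F_G \to p^{-1}(b)$. The equivariant version of Dold's argument, applied locally on a numerable $G$-cover of $B$ over which $p$ is $G$-fiber homotopy trivial, shows that $\Gamma(p) \to B$ is a principal $M$-quasifibration, which is classified up to equivalence by a map $f_p : B \to BM$; the pullback of the universal fibration along $f_p$ is then $G$-fiber homotopy equivalent to $p$ via the evaluation map $\Gamma(p) \times_M F_G \to X$. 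For injectivity, a $G$-fiber homotopy equivalence between two pullbacks $f_0^*$ and $f_1^*$ of the universal fibration assembles into an equivalence over $B \times I$, which by the universal property (again applied to $\Gamma$) is classified by a homotopy between $f_0$ and $f_1$.

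The main obstacle I expect is the equivariant analog of Stasheff's quasifibration argument: one has to verify that $M = \mathrm{Aut}_G(F_G)$ is well-pointed and grouplike so that the Milnor/bar construction behaves correctly, and that the $G$-space $\Gamma(p)$ of fiberwise $G$-homotopy equivalences really forms a principal $M$-quasifibration over $B$. Here the finiteness of $F_G$ as a $G$-CW-complex (used to ensure $\mathrm{Aut}_G(F_G)$ has the homotopy type of a CW-complex) and the numerable $G$-covering argument underlying Theorem \ref{uniformization theorem} play a crucial role; everything else is a direct transcription of Stasheff's proof into the $G$-equivariant category.
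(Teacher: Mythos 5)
Your plan is essentially the paper's own route: the paper does not write out a proof of Theorem \ref{classification theorem} at all, but simply asserts it follows by transcribing Stasheff's techniques to the $G$-equivariant setting (or as a special case of Waner's equivariant classifying-space theorem), which is exactly what you propose, with the trivial $G$-action on $B$ reducing everything to the monoid $\mathrm{Aut}_G(F_G)$. One caution on wording only: since $\mathrm{Aut}_G(F_G)$ is a monoid and not a group, $EM \times_M F_G \to B\mathrm{Aut}_G(F_G)$ is not locally trivial, so the step ``local triviality plus Theorem \ref{uniformization theorem}'' should be replaced by the standard Dold--Lashof/Stasheff argument (quasifibration plus conversion to a Hurewicz $G$-fibration via the mapping path space), a repair that is already part of the machinery you invoke.
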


This classification theorem can be proved by using the same
techniques and ideas from \cite{Stasheff}. Also, Waner constructs
a classifying space for a more general set of equivariant
fibrations in \cite{WanerClass} and the above theorem can be
obtained as a special case of his result.

The monoid $\mathrm{Aut}_G(F_G)$ is not connected in general.
However, its connected components are homotopy equivalent via the
maps $(\mathrm{Aut}_G(F_G),f) \rightarrow (\mathrm{Aut}_G(F_G),g)$
with $\phi \rightarrow g\phi f^{-1}$ where $f^{-1}$ is the
homotopy inverse of $f$. Furthermore, when the map $\pi_1(B)
\rightarrow [F_G, F_G]_G$ is trivial, $B\mathrm{Aut}_G^I(F_G)$
classifies $G$-fibrations $p:E \rightarrow B$ with trivial
$G$-actions on the base where $\mathrm{Aut}^I_G(F_G)$ is the
connected component of identity in $\mathrm{Aut}_G(F_G)$ (see
\cite{Main} for non-equivariant case).

For $G$-fibrations whose $G$-action on the base is not trivial, we
need to consider the collection of equivariant spaces. Note that
if $p$ is a $G$-fibration over $B$ and $b \in B^K$, then the space
$p^{-1}(b)$ is closed under $K$-action and hence a $K$-space.
Moreover, when $B^K$ is connected, the spaces $p^{-1}(b)$ and
$p^{-1}(b')$ are $K$-homotopy equivalent for every $b, b' \in
B^K$. On the other hand, when $H^a \leq K$, we have $h(ab)=
a(a^{-1}ha)b=ab$ for every $h \in H$ and $b\in B^K$, hence $ab \in
B^H$. Clearly, the $H$-space $p^{-1}(b)$, where the $H$-action on
$p^{-1}(b)$ is given by conjugation and the $H$-space $p^{-1}(ab)$
are $H$-homeomorphic. Therefore, when $B^H$ is connected for every
isotropy subgroup $H$ of $B$, the spaces $p^{-1}(b)$ and
$p^{-1}(ab)$ are $H$-homotopy equivalent for every isotropy
subgroups $H,K$ with $H^a \leq K$.

\begin{defin}\label{def:compatible}Let $\mathcal{H}$ be a family of subgroups of $G$ which is
closed under conjugation and taking subgroups. A family
$\mathcal{F}=\{F_H\}_{H\in \mathcal{H}}$ of $H$-spaces is said to
be a {\it compatible family} if $F_H$ is $H$-homotopy equivalent
to $F_K$ for every $H,K \in \mathcal{H}$ with $H^a \leq K$ for
some $a\in G$ where the $H$-action on $F_K$ is given by $h\cdot
y=a^{-1}hay$.
\end{defin}

Let $\mathcal{F}=\{F_{H}\}_{H \in \mathcal{H}}$ be a compatible
family where $\mathcal{H}$ contains the isotropy subgroups of $B$.
We say $p:E \rightarrow B$ is a {\it $G$-fibration with the fiber
type} $\mathcal{F}=\{F_H\}_{H \in \mathcal{H}}$ if $F_H \simeq_H
p^{-1}(b)$ for every $b \in B^H$ and for every isotropy subgroup
$H$ of $B$. When $B^H$ is connected for every $H \in \mathcal{H}$,
every $G$-fibration over $B$ is a $G$-fibration with the fiber
type $\mathcal{F}$. However, a $G$-fibration $p:E \rightarrow B$
does not necessarily have a fiber type.

\section{Tulley's theorem for $G$-fibrations}
\label{sect:Tulley'stheorem}

The aim of this section is to prove an equivariant version of a
theorem due to Tulley (see \cite[Theorem 11]{Tulley}). The proof
uses the same ideas and methods from \cite{Tulley} and
\cite{Tulley2}.

\begin{thm}\label{thm:equiv. version of Tulley} Let $p_1:E_1 \rightarrow B$ and $p_2: E_2 \rightarrow
B$ be $G$-fiber homotopy equivalent $G$-fibrations. Then there is
a $G$-fibration $q$ over $B \times I$ such that $q|_{B \times
\{0\}}=p_1$ and $q|_{B \times \{1\}}=p_2$.
\end{thm}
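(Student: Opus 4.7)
The plan is to carry out Tulley's non-equivariant proof (Theorems in \cite{Tulley, Tulley2}) while verifying that every ingredient respects the $G$-action. By hypothesis there are fiber-preserving $G$-maps $f:E_1 \to E_2$ and $g:E_2 \to E_1$ together with fiber-preserving $G$-homotopies $H:E_1 \times I \to E_1$ from $\mathrm{id}_{E_1}$ to $gf$ and $K:E_2 \times I \to E_2$ from $\mathrm{id}_{E_2}$ to $fg$, where ``fiber-preserving'' means the homotopies project to the identity on $B$ at each time. All the constructions that go into Tulley's argument — mapping cylinders, track decompositions, path spaces over $E_1 \sqcup E_2$ — are natural in the inputs, so they inherit canonical $G$-actions from those on $E_1$, $E_2$, $f$, $g$, $H$, $K$.

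First I would build the candidate total space $W$ over $B \times I$ via Tulley's construction: roughly a mapping cylinder $E_1 \cup_f (E_2 \times I)$ thickened by a space of short Moore paths in $E_1 \sqcup E_2$ that is designed precisely so that the projection to $I$ admits a lifting function. Because the ingredients are $G$-equivariant, $W$ is canonically a $G$-space, and the projection $q:W \to B \times I$ is a $G$-map satisfying $q^{-1}(B \times \{0\}) = E_1$ and $q^{-1}(B \times \{1\}) = E_2$ by construction.

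Next I would verify that $q$ is a $G$-fibration by a local-to-global argument using Theorem \ref{uniformization theorem}. Since the $G$-action on $I$ is trivial, any numerable cover of $I$ by open intervals yields a numerable $G$-cover of $B \times I$ after multiplication with $B$. I would choose three such intervals, one concentrated near $0$, one near $1$, and one in the middle. Over the first and third, the restriction of $q$ deformation $G$-retracts onto $p_1$ or $p_2$ respectively, and one obtains a $G$-lifting function by combining the $G$-lifting function of $p_i$ with a trivial lift in the $I$-coordinate. Over the middle piece, where $W$ mixes the $E_1$- and $E_2$-parts, one writes down an explicit $G$-lifting function using $H$, $K$ to concatenate paths that cross between $E_1$ and $E_2$ with lifts coming from $p_1$ and $p_2$.

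The main obstacle is this last step: writing the explicit lifting function on the middle strip so that it is continuous, satisfies $q \circ \lambda = \omega$, starts at the given initial point, and is $G$-equivariant. Tulley's formulas do the non-equivariant job; since each ingredient ($f$, $g$, $H$, $K$, and the lifting functions for $p_1$, $p_2$) is $G$-equivariant by hypothesis, the resulting lifting function commutes with the $G$-action by naturality, and no new ideas beyond \cite{Tulley, Tulley2} are needed. The only verification required beyond a direct transcription is that the $G$-equivariant lifting functions for $p_1$ and $p_2$ — which exist by the equivariant reformulation of being a $G$-fibration recalled just before Theorem \ref{uniformization theorem} — can be substituted for the non-equivariant ones throughout the argument.
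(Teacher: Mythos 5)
Your high-level plan coincides with the paper's: build the total space from the mapping cylinder $M_f$ of the $G$-fiber homotopy equivalence $f$, check the fibration property locally over a three-piece cover of $B\times I$ (products of $B$ with intervals), and invoke Theorem \ref{uniformization theorem}; equivariance is indeed automatic because every formula involves only $f$, $g$, the fiber-preserving $G$-homotopies and the $G$-lifting functions of $p_1,p_2$. However, you have misplaced the actual difficulty, and the step you do spell out at the ends would fail. In the correct construction the \emph{middle} band is just $p_f\times\mathrm{id}$ restricted to an open strip, which is a $G$-fibration as soon as one proves that $p_f:M_f\to B$ is a $G$-fibration (Lemma \ref{lem:mapping cylinder}); that lemma uses only $f$, $\lambda_1$, $\lambda_2$ and needs neither $g$ nor the homotopies. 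The genuinely hard piece is the strip containing $t=0$: there the total space equals $E_1$ over $t=0$ but (a thickened copy of) $M_f$ for $t>0$, so it is not a product in the $I$-direction, and no $G$-lifting function arises by ``combining the $G$-lifting function of $p_1$ with a trivial lift in the $I$-coordinate.'' What is needed is the separate statement (Proposition \ref{sdr-sfhe}): if $p_1$ is a strong $G$-deformation retract of a $G$-fibration, then the half-open tube $Z=\{(e,t)\in E_2\times I:\ e\in E_1 \ \mathrm{if}\ t=0\}$ is a $G$-fibration over $B\times I$. Its lifting function applies the retraction at rescaled times, roughly $H(\lambda(e,\omega_1)(t),\,t/\omega_2(t))$, and its continuity at $t=0$ requires a real argument (the paper uses a net/compactness argument); this does not follow formally from ``Tulley's formulas do the job.''

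This is also exactly where the hypothesis that $f$ is a $G$-\emph{fiber homotopy equivalence} enters: the homotopy-inverse data $g,H,K$ is used to show that $p_1$ is a strong $G$-deformation retract of $p_f$ (the equivariant version of the Piccinini mapping-cylinder lemma), which is the input to Proposition \ref{sdr-sfhe}; the $p_2$ end is easier, since $E_2$ sits in $M_f$ via the cylinder coordinate (handled either by the same proposition or by a fiber-preserving retraction of $M_f\times I$ onto the triangular region). In your sketch the data $g,H,K$ is invoked only in the middle strip, where it is not needed, and never at the $t=0$ end, where it is indispensable; as written, your argument would apply verbatim to an arbitrary fiber-preserving $G$-map $f$, for which the theorem is false. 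So the architecture is right, but the crux — the fibration property of the half-open tube at $t=0$, including the continuity of its lifting function and the reduction of the theorem to the strong $G$-deformation retract statement — is missing from the proposal.
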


We call the $G$-fibration $q:Z \rightarrow B \times I$ in Theorem
\ref{thm:equiv. version of Tulley} a {\it $G$-tube} between $p_1$
and $p_2$. Let $f:E_1 \rightarrow E_2$ be a fiber preserving
$G$-map between $G$-fibrations $p_1$ and $p_2$ over $B$. Recall
that the mapping cylinder $M_f$ of $f$ is the adjunction space
$E_1 \times I \cup_{f} E_2$ where $f(e)=(e,0)$ for each $e\in
E_1$. We define the $G$-map $p_f:M_f \rightarrow B$ over $B$ by
$p_f(x,s)=p_1(x)$ and $p_f(y)=p_2(y)$ for any $x \in E_1$, $y \in
E_2$, and $s\in I$.

\begin{lem}\label{lem:mapping cylinder} Let $f:E_1 \rightarrow E_2$ be a fiber preserving $G$-map between
$G$-fibrations $p_1$ and $p_2$ over $B$. Then the induced $G$-map
$p_f:M_f \rightarrow B$ is a $G$-fibration.
\end{lem}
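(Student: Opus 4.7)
My plan is to construct a $G$-lifting function $\lambda_f: \Omega_{p_f} \to M_f^I$ explicitly, using $G$-lifting functions $\lambda_1, \lambda_2$ for $p_1, p_2$ (which exist since these are $G$-fibrations). Without loss of generality I would take $\lambda_1, \lambda_2$ to be \emph{regular}, meaning they send constant paths to constant lifts; this is standard to arrange for a Hurewicz fibration. The idea is to treat the two parts of $M_f$ separately: on $E_2$ use $\lambda_2$ directly, while on the tube $E_1 \times (0,1]$ combine a ``slide-down'' toward $E_2$ with $\lambda_1$ during time $[0,s]$, and then continue the lift via $\lambda_2$ during $[s,1]$, reparametrizing so that the total path projects to $\omega$.

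Explicitly, for $z = y \in E_2$ I would set $\lambda_f(y,\omega)(t) = \lambda_2(y,\omega)(t)$, and for $z = (x,s)$ with $0 < s \leq 1$ I would set
\[
\lambda_f((x,s),\omega)(t) = \begin{cases}
\bigl(\lambda_1(x,\omega)(t),\, s - t\bigr) & 0 \leq t \leq s, \\[2pt]
\lambda_2\!\left(f(\lambda_1(x,\omega)(s)),\, \omega_s\right)\!\left(\dfrac{t - s}{1 - s}\right) & s < t \leq 1,
\end{cases}
\]
where $\omega_s(u) = \omega(s + (1-s)u)$. The first piece is a path in the tube that simultaneously lifts $\omega$ via $\lambda_1$ and lowers the height from $s$ to $0$, reaching $f(\lambda_1(x,\omega)(s)) \in E_2$ at time $t = s$; the second piece then continues the lift inside $E_2$ using $\lambda_2$ applied to the reparametrized tail of $\omega$. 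Routine verifications give $\lambda_f(z,\omega)(0) = z$, $p_f\circ\lambda_f(z,\omega) = \omega$, and agreement of the two pieces at $t = s$; $G$-equivariance is immediate from the equivariance of $\lambda_1, \lambda_2,$ and $f$ together with the naturality of the formulas.

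The main obstacle is continuity of $\lambda_f$ across the gluing locus $E_1 \times \{0\} \sim_f E_2$ of $M_f$. I would verify it by observing that as $s \to 0^+$ the first piece's domain $[0,s]$ collapses and the second piece's formula specializes to $\lambda_2(f(x),\omega)(t)$, matching the value given by the $E_2$-formula at $z = f(x)$. More generally, for a sequence $(x_n, s_n)$ in the tube with $s_n \to 0$ and $f(x_n) \to y \in E_2$ (the only way such a sequence can converge in the quotient topology of $M_f$), continuity of $\lambda_1$ gives $f(\lambda_1(x_n,\omega)(s_n)) \to y$, and then continuity of $\lambda_2$ lets the limit pass through the second piece to yield $\lambda_2(y,\omega)(t) = \lambda_f(y,\omega)(t)$. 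At the opposite boundary $s \to 1^-$, regularity of $\lambda_2$ ensures that the vanishing second piece matches Case~1 at $t = 1$. Once continuity is established, $\lambda_f$ is a $G$-lifting function for $p_f$, so $p_f$ is a $G$-fibration. As a fallback, if a direct continuity argument became awkward one could instead choose a numerable $G$-cover of $B$ and apply Theorem \ref{uniformization theorem} to the restrictions of the tube over each open set, but the direct construction above should suffice.
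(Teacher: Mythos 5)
Your overall strategy coincides with the paper's: build a lifting function for $p_f$ out of $\lambda_1,\lambda_2$ by sliding down the cylinder during $[0,s]$ and then continuing the lift in $E_2$. But two steps do not hold up as written. The more serious one is the continuity check at the gluing locus. From convergence $(x_n,s_n)\to y$ in $M_f$ you can only extract that $s_n\to 0$ and $f(x_n)\to y$ (via the height function and the cylinder retraction $M_f\to E_2$); the points $x_n$ themselves need not converge in $E_1$, so the continuity of $\lambda_1$ (or of its adjoint) cannot be applied to the non-convergent net $(x_n,\omega_n,s_n)$, and the key assertion $f(\lambda_1(x_n,\omega_n)(s_n))\to y$ is exactly the step left unjustified. (Sequences are also not adequate for continuity at this level of generality; one would need nets.) The clean repair is the one the paper uses: $\Omega_{p_f}$ is the adjunction space $\Omega_{p_1}\times I\cup_{\widetilde{f}}\Omega_{p_2}$ with $\widetilde{f}(e,\omega)=(f(e),\omega)$, so by the universal property of adjunction spaces it suffices to produce a continuous $G$-map $\Omega_{p_1}\times I\to M_f^I$ satisfying the lifting conditions and restricting to $\lambda_2\circ\widetilde{f}$ at $s=0$; continuity across the seam is then automatic and no limit argument there is needed.

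The second problem is your reparametrization $\omega_s(u)=\omega(s+(1-s)u)$ together with the evaluation at $(t-s)/(1-s)$, which forces the appeal to regularity at $s=1$. The claim that one may take $\lambda_1,\lambda_2$ regular ``without loss of generality'' is not free: regular lifting functions are guaranteed by Hurewicz's theorem for fibrations over metrizable bases, not for an arbitrary ($G$-)space $B$ as in the statement, and you would in addition need an equivariant version. The assumption is avoidable: use the unreparametrized tail $\omega^{s}(t)=\omega(s+t)$ for $s+t\leq 1$ and $\omega^{s}(t)=\omega(1)$ otherwise, and evaluate $\lambda_2(z,\omega^{s})$ at $t-s$; then as $s\to 1$ the evaluation parameter tends to $0$, so only the defining property $\lambda_2(e,\omega)(0)=e$ is used, with no regularity hypothesis. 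This is precisely the formula in the paper's proof. Finally, the fallback via Theorem \ref{uniformization theorem} does not rescue the argument, since restricting over an open subset of $B$ reproduces the same mapping-cylinder problem over that subset.
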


\begin{proof} The proof is similar to the proof of Theorem 1 in
\cite{Tulley2}. Let $\lambda_i: \Omega_{p_i} \rightarrow {E_i}^I$
be a $G$-lifting function for $p_i$, $i=1,2$. Since $\Omega_{p_f}=
\Omega_{p_1} \times I \cup_{\widetilde{f}} \Omega_{p_2}$ where
$\widetilde{f}(e,\omega)=(f(e), \omega)$, to show that $p_f$ is a
$G$-fibration it suffices to construct a $G$-map $\lambda:
\Omega_{p_1} \times I \rightarrow (M_f)^I$ such that
$\lambda|_{\Omega_{p_1} \times \{0\}}= \lambda_{2}\circ
\widetilde{f}$, $p_f \lambda((e,\omega),s)=\omega$, and
$\lambda((e, \omega),s)(0)=(e,s)$.

Define $\lambda: \Omega_{p_1} \times I \rightarrow M_f^I$ by
\begin{eqnarray*} \lambda(e,\omega, s)(t)= \left\{%
\begin{array}{ll}
    (\lambda_1(e,\omega)(t), s-t), & \hbox{$t \leq s$,} \\
    \lambda_2(z, \omega^s)(t-s), & \hbox{$s \leq t$,} \\
\end{array}%
\right.
\end{eqnarray*}
where $z=f\circ \lambda_1(e, \omega)(s)$ and $\omega^{s}$ is given
by $\omega^{s}(t)= \omega(s+t)$ when $s+t \leq 1$ and
$\omega^s(t)=1$, otherwise. Clearly, $\lambda$ is a continuous
$G$-map which satisfies the relations $\lambda|_{\Omega_{p_1}
\times \{0\}}= \lambda_{2}\circ \widetilde{f}$, $p_f
\lambda((e,\omega),s)=\omega$, and $\lambda((e,
\omega),s)(0)=(e,s)$.
\end{proof}

In order to prove Theorem \ref{thm:equiv. version of Tulley}, it
suffices to construct $G$-fibrations $q_1:Z_1 \rightarrow B \times
I$ and $q_2: Z_2 \rightarrow B \times I$ with $q_1|_{B \times
\{0\}}=p_1$, $q_1|_{B \times \{1\}}= q_2|_{B \times \{0\}}=p_f$
and $q_{2}|_{B \times \{1\}}=p_2$ where $f$ is the fiber
preserving $G$-map between $p_1$ and $p_2$. That is because once
we have such $G$-fibrations, we can obtain a $G$-tube between
$p_1$ and $p_2$ by gluing $q_1$ and $q_2$ as follows. Let $Z= Z_1
\cup_{i_1} M_f \times I \cup_{i_2} Z_2$ where $i_1(x)=(x,0)$ and
$i_2(x)=(x,1)$ for every $x \in M_f$. Define the $G$-map $q:Z
\rightarrow B\times I$ by
\begin{eqnarray*}q(z)=\left\{%
\begin{array}{ll}
   (\pi_1(q_{1}(z)), \ \frac{1}{3}\pi_2(q_{1}(z))), & \hbox{$z \in Z_1$;} \\
  (\pi_1(q_{2}(z)), \ \frac{2}{3}+\frac{1}{3}\pi_2(q_{2}(z))), & \hbox{$z \in Z_2$;} \\
    (p_f(x), \frac{1}{3}(1+t)), & \hbox{$z=(x,t) \in M_f \times I$} \\
\end{array}%
\right.
\end{eqnarray*} where $\pi_i$ is the projection map to the $i$-th coordinate.
Since we can  extend the lifting functions for $q_1$ and $q_2$ to
lifting functions for $q|_{B\times [0, \frac{6}{9})}$ and $q|_{B
\times (\frac{4}{9}, 1]}$, respectively, $G$-maps $q|_{B \times
[0, \frac{6}{9})}$ and  $q_{B \times (\frac{4}{9},1]}$ are
$G$-fibrations. Therefore $q:Z \rightarrow B \times I$ is a
$G$-fibration by Theorem \ref{uniformization theorem}.

\begin{prop} Let $Z_2=\{(e,s,t) \in E_1 \times I \times I | \
s+t \leq 1 \} \cup_{\widetilde{f}} E_2 \times I$ in $M_f \times I$
where $\widetilde{f}: E_1 \times \{0\} \times I$ is defined by
$\widetilde{f}(e,0,t)=(f(e),t)$. Then $q_{2}= (p_f \times
\mathrm{id})|_{Z_2}: Z_2 \rightarrow B \times I$ is a
$G$-fibration with $q_2|_{B \times \{0\}}=p_f$ and $q_{2}|_{B
\times \{1\}}=p_2$.
\end{prop}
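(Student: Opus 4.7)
My plan is to exhibit $q_2$ as a $G$-fibration by realizing $Z_2$ as a fiber-preserving $G$-retract of $M_f \times I$ over $B \times I$ and transferring the lifting function from Lemma \ref{lem:mapping cylinder}. The boundary conditions are immediate from the definition: at $t = 0$ the constraint $s + t \leq 1$ is vacuous, giving $q_2^{-1}(B \times \{0\}) \cong M_f$ with induced map $p_f$; at $t = 1$ the constraint forces $s = 0$, and after the gluing $\widetilde{f}$ the resulting fiber is $E_2 \times \{1\}$ with induced map $p_2$.

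The main step is to construct a $G$-equivariant, fiber-preserving retraction $\rho : M_f \times I \to Z_2$ by
\begin{equation*}
\rho\bigl((e,s),t\bigr) = \bigl((e, \min(s, 1-t)),\, t\bigr), \qquad \rho(y, t) = (y, t),
\end{equation*}
for $(e,s) \in E_1 \times I$ and $y \in E_2$. Continuity across the identification $(e, 0) \sim f(e)$ holds because $\min(0, 1-t) = 0$, and $G$-equivariance is automatic since $G$ acts trivially on the $I$-coordinates. One checks directly that $q_2 \circ \rho = p_f \times \mathrm{id}_I$ and that $\rho|_{Z_2} = \mathrm{id}_{Z_2}$.

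With $\rho$ in hand, the conclusion is formal: $p_f \times \mathrm{id}_I$ is a $G$-fibration (a lifting function is obtained from the one for $p_f$ provided by Lemma \ref{lem:mapping cylinder} by following the $I$-coordinate unchanged), and composing its $G$-lifting function $\mu$ with $\rho$ yields $\lambda(z, \omega) = \rho \circ \mu(z, \omega)$ as the desired $G$-lifting function for $q_2$. The main obstacle I anticipate is guessing the correct formula for $\rho$: a naive retraction like $\rho((e,s),t) = (f(e), t)$ on $\{s+t > 1\}$ would collapse the $E_1$-coordinate prematurely and fail to be continuous in $s$, whereas pushing $s$ down only to the threshold $1-t$ while leaving $e$ alone is exactly the compromise that preserves continuity, $G$-equivariance, and compatibility with $q_2$.
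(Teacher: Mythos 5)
Your proof is correct and follows essentially the same route as the paper: both exhibit $Z_2$ as a fiber-preserving $G$-retract of $M_f\times I$ (the paper's map $r$) and then transfer the lifting property of $p_f\times\mathrm{id}$ through the retraction, the paper by lifting a given homotopy and composing with $r$, you by composing lifting functions. Your formula $\rho\bigl((e,s),t\bigr)=\bigl((e,\min(s,1-t)),t\bigr)$ agrees with the paper's $r$ on $\{s+t\le 1\}$ and gives the evidently intended continuous value $(e,1-t,t)$ on the complementary region (where the paper's printed $(x,t,t)$ appears to be a typo), so the two arguments coincide.
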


\begin{center}\begin{tikzpicture}[scale=1.3]
\shade (0,2) -- (2,2)--(0,3.4)-- cycle;  \shade (-0.3,1.7) --
(1.7,1.7) --(2.3,2.3)--(0.3,2.3)-- cycle; \draw (0,2) -- (2,2) --
(0,3.4) -- cycle; \draw (0,2)--(-0.3,1.7) -- (1.7,1.7) -- (2,2);
\draw[-,dashed] (0,2)-- (0.3,2.3) -- (1.6,2.3);
\draw(1.6,2.3)--(2.3,2.3)--(2,2); 
\path (3,2.7) node (Y) {$Z_2$}; \draw[|-|] (-0.2,1) to (2,1);
\path (3,1) node (X) {$B \times I$}; \path (0,2.5) node (A)
{$\wedge$}; \path (-0.2, 2.5) node (B) {$s$}; \path (0.7, 1.98)
node (D) {$>$}; \path (0.7, 1.8) node(E) {$t$};
  \draw[->] (Y) to node {} (X);
 \path (3.3,1.9) node (Z) {$q_2$};
\end{tikzpicture}
\end{center}

\begin{proof} Let $r: M_f \times I \rightarrow Z_2$ be defined by $r|_{E_2 \times I}=
\mathrm{id}_{E_2 \times I}$ and
\begin{eqnarray*} r|_{E_1 \times I \times I}(x,s,t)=\left\{%
\begin{array}{ll}
(x,s,t) & \hbox{$s+t \leq 1$;} \\
    (x,t,t), & \hbox{otherwise.} \\
\end{array}%
\right.
\end{eqnarray*}
Then $r$ is a fiber preserving $G$-retraction. Let $H: X \times I
\rightarrow B$ and $h: X \rightarrow Z_2$ be given $G$-maps with
$H|_{X \times \{0\}}=p_2 \circ h$. Since $p_f \times \mathrm{id}$
is a $G$-fibration, there is a $G$-map $\bar{H}: X \times I
\rightarrow M_f \times I$ which makes the following diagram
commute:
\begin{center}
\begin{tikzpicture}[node distance=2.3cm, auto]
  \node (P) {$X \times \{0\}$};
  \node (D) [right of=P] {$Z_2$};
  \node (E) [right of=D] {$M_f \times I$};
  \node (A) [below of=P] {$X \times I$};
  \node (B) [below of=D] {$B \times I$};
  \node (C) [below of=E] {$B \times I$};
  \path (3.1,-1.05) node (X) {$\bar{H}$};
  \path (2.1,-1) node (Y) {$p_2$};
  \draw[->] (P) to node {$h$} (D);
  \draw[right hook->] (P) to node [swap] {} (A);
  \draw[->] (A) to node [swap] {$H$} (B);
  \draw[->] (D) to node [swap] {} (B);
  \draw[->] (E) to node {$ p_f \times \mathrm{id}$} (C);
  \draw[->] (A) to node {} (E);
  \draw[->, dashed] (A) to node {$\widetilde{H}$} (D);
  \draw[double, -] (B) to node [swap] {} (C);
  \draw[->] (E)[bend right] to node [swap] {$r$} (D);
  \draw[right hook->] (D) to node {$i$} (E);
\end{tikzpicture}
\end{center} Then the $G$-map $\widetilde{H}: X\times I \rightarrow Z_2$
defined by $\widetilde{H}=r\bar{H}$ satisfies $p_2
\widetilde{H}=H$ and $\widetilde{H}|_{X \times \{0\}}= h$.
\end{proof}

\begin{defin} Let $p_i: E_i \rightarrow B$
be a $G$-fibration for $i=1,2$ with $E_1 \subseteq E_2$ and
$p_2|_{E_1}=p_1$. Then $p_1$ is said to be a $G$-deformation
retract of $p_2$ if $E_1$ is a $G$-deformation retract of $E_2$
via fiber preserving $G$-retraction, that is, if there is a
$G$-map $H:E_2 \times I \rightarrow E_2$ such that $H_0=
\mathrm{id}_{E_2}$, $H(e,1) \in E_1$ and $p_2H(e,t)=p_2(e)$ for
every $e \in E_2$ . If $H$ also satisfies the relation $H(e,t)=e$
for every $e \in E_1$, we say $p_1$ is a strong $G$-deformation
retract of $p_2$.
\end{defin}

To show that there is a $G$-tube $q_1: Z_1 \rightarrow B \times I$
between $p_1$ and $p_f$, we need the following proposition. The
non-equivariant version of this proposition is proved in
\cite{Tulley} and used in a recent paper by Steimle
\cite{Steimle}.

\begin{prop}\label{sdr-sfhe} If $p_1$ is a strong $G$-deformation
retract of $p_2$ then the $G$-map $q=(p_{2} \times
\mathrm{id})|_{Z}:Z \rightarrow B\times I$ where $Z=\{(e,t) \in
E_2 \times I| \ e \in E_2 \ \mathrm{if} \ t>0, \ e \in E_1 \
\mathrm{if} \ t=0\}$ is a $G$-fibration.
\end{prop}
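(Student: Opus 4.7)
The proof will use the equivariant uniformization theorem (Theorem \ref{uniformization theorem}) applied to the numerable $G$-cover $\{V_1, V_2\}$ of $B \times I$ given by $V_1 = B \times (0, 1]$ and $V_2 = B \times [0, \tfrac{1}{2})$; numerability is witnessed by the $G$-invariant maps $f_1(b, t) = t$ and $f_2(b, t) = \max\{0, 1 - 2t\}$ (both are $G$-invariant because $G$ acts trivially on $I$). Over $V_1$ one has $q^{-1}(V_1) = E_2 \times (0, 1]$, and the restriction of $q$ coincides with $p_2 \times \mathrm{id}_{(0, 1]}$, so it is immediately a $G$-fibration. The substantive task is then to prove that $q|_W : W \to V_2$ is a $G$-fibration, where $W := q^{-1}(V_2) = (E_1 \times \{0\}) \cup (E_2 \times (0, \tfrac{1}{2}))$.

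For this step, the plan is to exhibit a fiber preserving $G$-homotopy equivalence over $V_2$ between $q|_W$ and the manifestly $G$-fibration $p_1 \times \mathrm{id} : E_1 \times [0, \tfrac{1}{2}) \to V_2$. Let $\iota(e, t) = (e, t)$ be the inclusion $E_1 \times [0, \tfrac{1}{2}) \hookrightarrow W$ and let $r(e, t) = (H(e, 1), t)$ be the candidate retraction $W \to E_1 \times [0, \tfrac{1}{2})$; both are fiber preserving $G$-maps over $V_2$, and $r \circ \iota = \mathrm{id}$ since $H(e, 1) = e$ for $e \in E_1$. The map $\Theta((e, t), s) := (H(e, s), t)$ is then a fiber preserving $G$-homotopy from $\mathrm{id}_W$ to $\iota \circ r$. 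Invoking the standard fact that being a $G$-fibration is preserved under fiber preserving $G$-homotopy equivalence (an equivariant version of a well known result of Dold) then yields that $q|_W$ is a $G$-fibration, after which Theorem \ref{uniformization theorem} completes the proof.

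I expect the one delicate point to be verifying that $\Theta$ actually takes values in $W$ rather than merely in $E_2 \times [0, \tfrac{1}{2})$. For $t > 0$ this is automatic, since then $(e', t) \in W$ for every $e' \in E_2$. For $t = 0$ one has $(e, 0) \in W$ only when $e \in E_1$, and here the \emph{strong} deformation retract hypothesis $H(e, s) = e$ for all $e \in E_1$ and $s \in I$ is exactly what keeps $\Theta((e, 0), s) = (e, 0)$ inside $E_1 \times \{0\} \subseteq W$. This is the only place in the argument where strongness (as opposed to mere deformation retraction) is used; the remainder is formal $G$-fibration bookkeeping.
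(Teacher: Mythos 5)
There is a genuine gap, and it sits exactly at the point you dismiss as ``formal $G$-fibration bookkeeping'': the ``standard fact'' that being a $G$-fibration is preserved under fiber preserving $G$-homotopy equivalence over the base is false, already non-equivariantly. What Dold's theory gives is only that such an equivalence preserves the \emph{weak} covering homotopy property (Dold/$h$-fibrations), not the Hurewicz lifting property needed here; the other Dold-type statement quoted in the paper (a fiber preserving homotopy equivalence between two fibrations is a fiber homotopy equivalence) presupposes that both maps are already fibrations, which is precisely what is in question for $q|_W$. A concrete counterexample to your principle: take $B=I$, $E=(\{0\}\times I)\cup(I\times\{0\})\subseteq I\times I$ with $p=\mathrm{pr}_1$, and $E'=B$ with $p'=\mathrm{id}$. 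The maps $r(t,s)=t$ and $\iota(t)=(t,0)$ give a fiberwise homotopy equivalence over $B$ (via $((t,s),u)\mapsto(t,(1-u)s)$), and $p'$ is a fibration, yet $p$ is not: the path $u\mapsto u$ has no continuous lift starting at $(0,1)$. So fiberwise equivalence to $p_1\times\mathrm{id}$ over $V_2$ does not by itself yield the lifting property for $q|_W$; indeed your restriction to $V_2=B\times[0,\tfrac12)$ does not reduce the problem at all, since all the difficulty of the proposition is concentrated at $t=0$, which lies in $V_2$, and $q|_W$ is essentially the original map again. Your cover-and-uniformize step and the homotopy $\Theta$ (where strongness of $H$ is indeed needed to stay in $W$ at $t=0$) are fine, but they leave the substantive claim unproved.

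The paper's proof supplies exactly what is missing: it writes down an explicit $G$-lifting function $\lambda'$ for $q$ out of a $G$-lifting function $\lambda$ for $p_2$ and the strong $G$-deformation retraction $H$, by rescaling the deformation parameter as $t/\omega_2(t)$ (and setting it to $1$ once $t\geq\omega_2(t)$, to $e$ itself when $t=\omega_2(t)=0$), and then the only delicate issue is continuity of this lift at $t=0$, which is checked by a net argument using compactness of $I$ and continuity of $H$ and $\lambda$. If you want to salvage your outline, you must replace the appeal to the false invariance principle by such a direct construction of a lifting function (or an equivariant form of Tulley's/Langston's argument); as written, the proof does not go through.
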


\begin{center}\begin{tikzpicture}[scale=1.5]
\shade (0,2)--(2,2)--(2,3.4)--(0,3.4)--cycle; \draw
(0,3.4)--(2,3.4)--(2,2)--(0,2); \path (3,3) node (Y) {$Z$}; \draw
(0, 2.4)--(0,2.9); \draw[-,dashed] (0,2)--(0,2.4); \draw[-,dashed]
(0,2.9)--(0,3.4); \draw[|-|] (0,1.3) to (2,1.3); \path (3,1.3)
node (X) {$B \times I$}; \path(-0.2,2.7) node (Z) {$E_1$};
  \draw[->] (Y) to node {} (X);
 \path (3.3,2) node (Z) {$q$};
\end{tikzpicture}
\end{center}

\begin{proof} We are using the same approach that is used in the
proof of Theorem 3.1 in \cite{Langston}. Let $H: E_2 \times I
\rightarrow E_2$ be a strong $G$-deformation retraction of the
$G$-fibration $p_2$ onto $p_1$. Let $\lambda: \Omega_{p_2}
\rightarrow E_2^I$ be a $G$-lifting function for $p_2$. Define a
$G$-map $\lambda': \Omega_{q} \rightarrow Z^I$ by $\pi_2 \big(
\lambda'((e,\omega_2(0)), (\omega_1, \omega_2))(t)\big )=
\omega_2(t)$ and
$$\pi_1 \big ( \lambda'((e,\omega_2(0)), (\omega_1, \omega_2))(t)\big ) =\left\{%
\begin{array}{ll}
    H(\lambda(e,\omega_1)(t), \frac{t}{\omega_2(t)}), & \hbox{$\omega_2(t)>0, \ \omega_2(t) \geq t$;} \\
    e, & \hbox{$t=\omega_2(t)=0$;} \\
    H(\lambda(e,\omega_1)(t), 1), & \hbox{$t>0, \ t \geq \omega_2(t)$.} \\
\end{array}%
\right.    $$ Clearly, $q \lambda'((e,\omega_2(0)), (\omega_1,
\omega_2))= (\omega_1,\omega_2)$ and $\lambda' ((e,\omega_2(0)),
(\omega_1, \omega_2))(0)=e$. Therefore we only need to check the
continuity of $ \pi_1 \lambda'$ at $t=0$. For this it suffices to
show that the adjoint map $\widetilde{\pi_1\lambda'}: \Omega_q
\times I \rightarrow E_2$ is continuous at $t=0$.

Let $(e_{\alpha},\omega_{1, \alpha},\omega_{2,\alpha},
t_{\alpha})$ be a net converging to $(e,\omega_1,\omega_2,0)$. Let
$U$ be an open neighborhood of $e \in E_1$. Since $H: E_2 \times I
\rightarrow E_2$ is continuous, $V= H^{-1}(U)$ is open. Since
$(e,t) \in V$ for every $t \in I$, there are open neighborhoods
$A_t \ni e$ and $V_t \ni t$ such that $A_t \times V_t \subseteq
V$, for all $t \in I$. Since $I$ is compact, there exist $t_1,
\dots , t_n$ such that $I= \cup_{i=1}^n V_{t_i}$. Then $A=
\cap_{i=0}^n A_{t_i}$ is an open neighborhood of $e$ with the
property that $H(A \times I) \subseteq U$. Since $\lambda$ is
continuous, there is $\beta$ such that
$\widetilde{\lambda}(e_{\alpha}, \omega_{1,\alpha}, t_{\alpha})
\in A$ for every $\beta > \alpha$. Therefore
$\widetilde{\pi_1\lambda'}(e_{\alpha},\omega_{1,
\alpha},\omega_{2,\alpha}, t_{\alpha}) \in U$ for every $\alpha >
\beta$ as desired.
\end{proof}

It is proved in \cite[Corollary 2.4.2]{Piccinini} that when $f:
E_1 \rightarrow E_2$ is a homotopy equivalence then $E_1$ is a
strong deformation retract of $M_f$. The same proof applies to
$G$-fibrations (see \cite[Lemma 2.5.2]{Guclukan}). Therefore, by
Proposition \ref{sdr-sfhe}, there is a $G$-tube $q_1$ between
$p_1$ and $p_f$. Note that $p_2$ is also a strong $G$-deformation
retract of $p_f$ and one can also use Proposition \ref{sdr-sfhe}
to construct a $G$-tube between $p_2$ and $p_f$. This completes
the proof of Theorem \ref{thm:equiv. version of Tulley}. As an
immediate corollary, we have:

\begin{cor}\label{cor:genel} Let $B_1,B_2,$ and $B$ be topological spaces such that
$B \subseteq B_1 \cap B_2$. If $p_1:E_1 \rightarrow B_1$ and $p_2:E_2
\rightarrow B_2$ are $G$-fibrations with $p_1|_{B} \simeq
p_2|_{B}$ then there is a $G$-fibration over $B_1 \cup_{i_1} (B
\times I) \cup_{i_2} B_2$ extending $p_1$ and $p_2$ where $i_j: B
\times \{j\} \rightarrow B_j$ are inclusions.
\end{cor}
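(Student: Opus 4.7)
The plan is to insert a $G$-tube between $p_1|_B$ and $p_2|_B$ over $B \times I$, produced by the equivariant Tulley theorem (Theorem \ref{thm:equiv. version of Tulley}), glue it onto $p_1$ and $p_2$ along the common fibers, and verify that the resulting $G$-map is a $G$-fibration by the local-to-global principle (Theorem \ref{uniformization theorem}).

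First I would apply Theorem \ref{thm:equiv. version of Tulley} to the $G$-fiber homotopy equivalent $G$-fibrations $p_1|_B$ and $p_2|_B$, obtaining a $G$-tube $q: Z \to B \times I$ with $q|_{B \times \{0\}}=p_1|_B$ and $q|_{B \times \{1\}}=p_2|_B$. Then I would form the pushout
\[
E \;=\; E_1 \cup_{p_1^{-1}(B)} Z \cup_{p_2^{-1}(B)} E_2,
\]
using the identifications $q^{-1}(B \times \{0\}) = p_1^{-1}(B) \subseteq E_1$ and $q^{-1}(B \times \{1\}) = p_2^{-1}(B) \subseteq E_2$, and define a $G$-map $\tilde{p}: E \to X := B_1 \cup_{i_1}(B \times I) \cup_{i_2} B_2$ that agrees with $p_1$, $q$, and $p_2$ on the respective pieces. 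By construction $\tilde{p}$ is well-defined and extends both $p_1$ and $p_2$.

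The remaining task is to show that $\tilde{p}$ is a $G$-fibration. I would invoke Theorem \ref{uniformization theorem} with the numerable open $G$-cover $\{U_1, V, U_2\}$ of $X$ given by $U_1 = B_1 \cup_{i_1}(B \times [0, 2/3))$, $V = B \times (1/3, 2/3)$, and $U_2 = (B \times (1/3, 1]) \cup_{i_2} B_2$. Over $V$, the restriction of $\tilde{p}$ coincides with $q|_V$, hence is immediately a $G$-fibration.

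The main obstacle is verifying that $\tilde{p}|_{U_1}$ and $\tilde{p}|_{U_2}$ are $G$-fibrations, since each contains a gluing locus $B \times \{0\}$ or $B \times \{1\}$. For $U_1$ I would construct a $G$-lifting function directly, using the retraction $r: U_1 \to B_1$ that collapses the collar to its base: a path $\omega$ in $U_1$ is first lifted along $r \circ \omega$ in $B_1$ via the lifting function of $p_1$, and then completed into the collar via the lifting function of $q$ applied to the straight-line path from $(r(\omega(t)),0)$ to $\omega(t)$, with the transition continuously parametrized along $\omega$ in the spirit of the reparametrization used in Lemma \ref{lem:mapping cylinder}. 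A symmetric argument handles $U_2$, completing the proof.
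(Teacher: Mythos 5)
Your overall strategy (produce a $G$-tube between $p_1|_B$ and $p_2|_B$ via Theorem \ref{thm:equiv. version of Tulley}, glue it to $p_1$ and $p_2$, and conclude by Theorem \ref{uniformization theorem}) is the paper's, but you omit its key device, and the omission creates a genuine gap. The paper does not glue $E_1$ and $E_2$ directly to the tube: it inserts product collars $p_1^{-1}(B)\times[0,\tfrac{1}{3}]$ and $p_2^{-1}(B)\times[\tfrac{2}{3},1]$ between the $E_i$ and $Z$ (with $Z$ re-indexed over $B\times[\tfrac{1}{3},\tfrac{2}{3}]$). The point of the collars is that every member of the resulting cover of the new base is then the pullback of one of the fibrations already in hand ($p_1$, $q$, or $p_2$) along the map collapsing the adjacent collar interval, hence is a $G$-fibration with no further work, and Theorem \ref{uniformization theorem} finishes. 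In your construction the seam $B\times\{0\}$ lies in the interior of $U_1=B_1\cup_{i_1}(B\times[0,2/3))$, the fibers jump from $p_1^{-1}(b)$ to $q^{-1}(b,t)$ across it, and $\widetilde{p}|_{U_1}$ is not a pullback of anything you control; proving it is a $G$-fibration is exactly the original gluing problem over again --- precisely the issue the paper flags when it notes that attaching total spaces directly yields in general only a quasifibration.

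Your sketched lifting function for $U_1$ does not close this gap. First, it presupposes that the initial point lies in $E_1$: if $e\in q^{-1}(b,s)$ with $s>0$, the expression $\lambda_1(e,\cdot)$ is undefined, and any recipe that first pushes $e$ down into $E_1$ will violate the initial condition $\widetilde{H}|_{X\times\{0\}}=h$. Second, even for $e\in E_1$, applying the lifting function of $q$ to the degenerate vertical path at times when $\omega(t)$ lies in $B_1$ need not return the input point (lifting functions are not regular in general), so the initial condition and consistency along the seam require an extra argument. Third, and most importantly, the analogy with Lemma \ref{lem:mapping cylinder} fails: there the interval coordinate is a fiber direction over the fixed base $B$, so the lift may be reparametrized freely and the switch from $\lambda_1$ to $\lambda_2$ occurs at a single time determined by the starting point; here the interval coordinate is a base coordinate, the lift over $\omega(t)$ is forced to lie in the fiber over $\omega(t)$, and a base path may cross the seam $B\times\{0\}$ back and forth (even infinitely often), so one must alternate between $\lambda_1$ and the lifting function of $q$ in a pattern dictated by the path, and no continuity argument is offered for that. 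The simplest repair is the paper's: interpose the product collars, after which each element of an open numerable cover is a pullback along a collapse retraction and Theorem \ref{uniformization theorem} applies verbatim.
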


\begin{proof} By Theorem \ref{thm:equiv. version of Tulley}, there is a
$G$-tube $q:Z \rightarrow B \times I$ between $p_1|_{B}$ and
$p_2|_B$. Without loss of generality, we can consider $q$ over $B
\times [\frac{1}{3},\frac{2}{3}]$ with $q|_{B \times
\{\frac{1}{3}\}}= p_1$ and $q|_{B \times \{\frac{2}{3}\}}= p_2$.
Let $$\widetilde{Z}= E_1 \cup_{k_1} \Big( p_1^{-1}(B) \times [0,
\frac{1}{3}] \Big ) \cup_{m_1} Z \cup_{m_2} \Big( p_2^{-1}(B)
\times [\frac{2}{3},1] \Big) \cup_{k_2} E_2$$ where $k_1:
p_1^{-1}(B) \times \{0\} \rightarrow E_1$, $k_2: p_1^{-1}(B)
\times \{1\} \rightarrow E_2$ and $m_1: p_1^{-1}(B) \times
\{\frac{1}{3}\} \rightarrow Z$, $m_2: p_2^{-1}(B) \times
\{\frac{2}{3}\} \rightarrow Z$ are the inclusions. Define a
$G$-map $$\widetilde{q}:\widetilde{Z} \rightarrow B_1 \cup_{i_1}
\big( B \times I \big) \cup_{i_2} B_2$$ by
$\widetilde{q}|_{E_i}=p_i$, $\widetilde{q}|_{Z \times
[\frac{1}{3}, \frac{2}{3}]}(z,t)= q(z)$ and by
$\widetilde{q}(e,t)= p_j(e)$ for $t \in [\frac{2(j-1)}{3},
\frac{2j-1}{3}])$, $j=1,2$.
\begin{center}\begin{tikzpicture}[scale=0.5] \draw [rounded
corners=15pt, thick] (3,4.5)--(0.5,5.5)--(0,3)--(1.5,0)--(3,1.5);
\draw[-] (3,4.5)--(7.5,4.5); \draw[-] (3,1.5)--(7.5,1.5); \draw
[rounded corners=15pt, thick]
(7.5,4.5)--(10,6)--(11.5,4.5)--(12,3)--(11.5,1)--(10,0)--(7.5,1.5);
\draw[thick] (3,1.5) arc (270:90:0.5cm and 1.5cm);
\draw[dashed,thick] (3,4.5) arc (90:-90:0.5cm and 1.5cm);
\draw[thick] (7.5,1.5) arc (270:90:0.5cm and 1.5cm);
\draw[dashed,thick] (7.5,4.5) arc (90:-90:0.5cm and 1.5cm);
\draw[thick] (4.5,1.5) arc (270:90:0.5cm and 1.5cm);
\draw[dashed,thick] (4.5,4.5) arc (90:-90:0.5cm and 1.5cm);
\draw[thick] (6,1.5) arc (270:90:0.5cm and 1.5cm);
\draw[dashed,thick] (6,4.5) arc (90:-90:0.5cm and 1.5cm);
\path (1.5, -0.5) node (Z) {$B_1$}; \path (10, -0.5) node (Y)
{$B_2$}; \path (5.5,1) node (X) {$B\times I$}; \path (5,7) node
(A) {$\widetilde{Z}$}; \draw[->] (A) to (5,5); \path (5.4,5.7)
node (B) {$\widetilde{q}$};
\end{tikzpicture}
\end{center}
Clearly the restriction of $\widetilde{q}$ to the following
subsets are $G$-fibrations
$$\{B_1 \cup_{i_1} B \times [0, \frac{2}{9}], B \times [\frac{1}{9}, \frac{5}{9}],
B \times [\frac{4}{9},\frac{8}{9}], B \times [\frac{7}{9},1]
\cup_{i_2}E_2 \}$$ Therefore, by Theorem \ref{uniformization
theorem}, $\widetilde{q}$ is a $G$-fibration.
\end{proof}

\begin{thm} \label{mainthmfin} Let $p_i:E_i \rightarrow B$, $i=1,2$, be $G$-fiber homotopy equivalent
$G$-fibrations such that $E_1$ and $E_2$ have the $G$-homotopy
type of a $G$-CW-complex. Then there is a $G$-tube $q$ between
$p_1$ and $p_2$ whose total space is $G$-homotopy equivalent to a
$G$-CW-complex.
\end{thm}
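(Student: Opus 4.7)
The plan is to take the $G$-tube $q: Z \to B \times I$ already produced by Theorem \ref{thm:equiv. version of Tulley}, and argue that no further modification of the construction is necessary: the inclusion $i: E_1 = q^{-1}(B \times \{0\}) \hookrightarrow Z$ is automatically a $G$-homotopy equivalence, whence $Z$ has the $G$-homotopy type of a $G$-CW-complex because $E_1$ does.

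To prove that $i$ is a $G$-homotopy equivalence I would use the $G$-homotopy lifting property of $q$ applied to the linear contraction of $B \times I$ onto $B \times \{0\}$. Concretely, define a $G$-homotopy $H: Z \times I \to B \times I$ by
$$H(z,t) = \bigl(\pi_1 q(z),\, (1-t)\, \pi_2 q(z)\bigr),$$
where $\pi_1, \pi_2$ denote the projections from $B \times I$, so that $H_0 = q$ and $H_1$ lands in $B \times \{0\}$. Using the $G$-homotopy lifting property for $q$ with initial lift $\mathrm{id}_Z$ produces $\widetilde{H}: Z \times I \to Z$ with $\widetilde{H}_0 = \mathrm{id}_Z$ and $q \widetilde{H}_t = H_t$. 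Since $q(\widetilde{H}_1(z)) \in B \times \{0\}$ for every $z \in Z$, the map $\widetilde{H}_1$ factors uniquely as $i \circ \bar{r}$ for a $G$-map $\bar{r}: Z \to E_1$. The homotopy $\widetilde{H}$ itself witnesses $i \circ \bar{r} \simeq_G \mathrm{id}_Z$; and for $z \in E_1$ one has $H(z,t) = q(z)$ for all $t$, which forces $\widetilde{H}(z,t) \in q^{-1}(B \times \{0\}) = E_1$ throughout, so $\widetilde{H}|_{E_1 \times I}$ is a $G$-homotopy from $\mathrm{id}_{E_1}$ to $\bar{r} \circ i$. Hence $i$ is a $G$-homotopy equivalence and $Z \simeq_G E_1$ as $G$-spaces.

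I do not anticipate any substantive obstacle here: the delicate geometric construction of a $G$-tube has already been carried out by Theorem \ref{thm:equiv. version of Tulley}, and what remains reduces to the equivariant analogue of the standard fact that pulling back a fibration along a deformation retraction yields a fibre homotopy equivalence on total spaces. The only point requiring a moment of care is checking that the lifted homotopy stays inside $E_1$ on $E_1$, which is immediate from $H$ being the constant homotopy at $q(z)$ when $z \in E_1$. Once this is in place, the homotopy invariance of the property ``having the $G$-homotopy type of a $G$-CW-complex'' under $G$-homotopy equivalence completes the argument.
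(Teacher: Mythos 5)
Your argument is correct, but it proves the theorem by a genuinely different (and softer) route than the paper. The paper works with the explicit tube $Z=Z_1\cup_{i_1} M_f\times[\frac{1}{3},\frac{2}{3}]\cup Z_2$ built in the proof of Theorem \ref{thm:equiv. version of Tulley} and observes that this particular $Z$ sits inside $M_f\times I$ as a strong $G$-deformation retract; since $M_f$ is $G$-homotopy equivalent to $E_2$, the conclusion follows from the hypothesis on $E_2$. You instead ignore the internal structure of the tube entirely: for \emph{any} $G$-fibration $q:Z\rightarrow B\times I$ restricting to $p_1$ over $B\times\{0\}$, you lift the $G$-homotopy $H(z,t)=(\pi_1 q(z),(1-t)\pi_2 q(z))$ with initial lift $\mathrm{id}_Z$ (legitimate, since a $G$-fibration has the $G$-homotopy lifting property with respect to every $G$-space, in particular $Z$ itself), and the key point you identify is sound: for $z\in E_1$ the homotopy $H$ is stationary, so the constraint $q\widetilde{H}=H$ forces $\widetilde{H}(E_1\times I)\subseteq q^{-1}(B\times\{0\})=E_1$, giving both $i\circ\bar{r}\simeq_G\mathrm{id}_Z$ and $\bar{r}\circ i\simeq_G\mathrm{id}_{E_1}$. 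Hence $Z\simeq_G E_1$, and the hypothesis on $E_1$ finishes the proof. What your approach buys is generality and independence from the construction: it shows every $G$-tube over $B\times I$ has total space $G$-homotopy equivalent to $E_1$ (and, by symmetry, to $E_2$), and it only uses one of the two CW hypotheses. What the paper's approach buys is an explicit, lifting-free identification of the retraction (indeed a strong $G$-deformation retract of $M_f\times I$), which keeps the geometric picture of the tube visible and matches the style of the surrounding constructions.
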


\begin{proof} Let $f: E_1 \rightarrow E_2$ be a $G$-fiber homotopy equivalence. Recall that the total space of the
$G$-tube $q$ we constructed is $Z= Z_1 \cup_{i_1} M_f \times
[\frac{1}{3},\frac{2}{3}] \cup Z_2$ where
\begin{eqnarray*}
Z_1&=&\{(e,t) \in M_f \times [0, \frac{1}{3}]| \ e \in M_f \
\mathrm{if} \ 0< t \leq \frac{1}{3}, \ e \in E_1 \ \mathrm{if} \ t=0\}\\
Z_2&=&\{(x,s,t) \in E_1 \times I \times [\frac{2}{3},1] | \ s+3t
\leq 3 \} \cup_f E_2 \times [\frac{2}{3},1].
\end{eqnarray*}
\begin{center}\begin{tikzpicture}[scale=1.1]
\shade (0,2)--(0,3.4)--(4,3.4)--(6,2)--cycle; \shade
(0,2)--(6,2)--(5.7,1.7)--(-0.3,1.7)--cycle; \shade
(5.6,2.3)--(6.3,2.3)--(6,2)--cycle; \draw[-] (2,2)--(6,2);
\draw[-] (5.6,2.3)--(6.3,2.3)--(6,2); \draw[-]
(1.7,1.7)--(5.7,1.7)--(6,2)--(4,3.4)--(2,3.4); \draw
(4,3.4)--(4,2)--(3.7,1.7); \draw[-] (2,3.4)--(2,2)--(1.7,1.7);
\draw[-] (-0.29,1.7)--(1.7,1.7); \draw[-,dashed]
(-0.3,1.7)--(0,2); \draw[-,dashed] (0,3.4)--(0,2); \draw[-,dashed]
(0,2)--(0.3,2.3)--(5.6,2.3); \draw(2,3.4)--(0,3.4); \fill (0,3.4)
circle (1pt); 
\end{tikzpicture}\end{center}
Clearly, $Z$ is a strong $G$-deformation retract of $M_f$. On the
other hand, $M_f$ is $G$-homotopic to $E_2$ and hence it has a
$G$-homotopy type of a $G$-CW-complex.
\end{proof}

\begin{cor}\label{maincor} A $G$-fibration $p:E \rightarrow \mathbb{S}^{n-1}$ over $(n-1)$-sphere
with trivial $G$-action on the base extends to a $G$-fibration
over a disk if and only if it is $G$-fiber homotopy equivalent to
a trivial $G$-fibration.
\end{cor}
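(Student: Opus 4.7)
The plan is to prove both directions by combining the classification of $G$-fibrations over contractible bases with the gluing construction from Corollary \ref{cor:genel}.

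For the ``only if'' direction, suppose $p$ extends to a $G$-fibration $\bar{p}: \bar{E} \to D^n$. Since $D^n$ is contractible with trivial $G$-action, Theorem \ref{classification theorem} classifies $G$-fibrations over $D^n$ with fiber having the $G$-homotopy type of $F_G$ by the set $[D^n, B\mathrm{Aut}_G(F_G)]$, which is a singleton. Hence $\bar{p}$ is $G$-fiber homotopy equivalent to the trivial $G$-fibration $D^n \times F_G \to D^n$, and restricting the equivalence to $\partial D^n = S^{n-1}$ shows that $p \simeq_G (S^{n-1} \times F_G \to S^{n-1})$.

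For the ``if'' direction, assume $p$ is $G$-fiber homotopy equivalent to a trivial $G$-fibration $\pi: S^{n-1} \times F_G \to S^{n-1}$. The plan is to apply Corollary \ref{cor:genel} with $B_1 = S^{n-1}$, $B_2 = D^n$, $B = \partial D^n = S^{n-1}$, $p_1 = p$, and $p_2 : D^n \times F_G \to D^n$ the trivial fibration. Since $p_2|_B = \pi$ and $p \simeq_G \pi$ by hypothesis, Corollary \ref{cor:genel} produces a $G$-fibration $\widetilde{q}$ over the space $W := S^{n-1} \cup_{i_1} (S^{n-1} \times I) \cup_{i_2} D^n$ whose restriction to the $B_1$ copy of $S^{n-1}$ is exactly $p$. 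To conclude, one identifies $W$ with $D^n$ as $G$-spaces by absorbing the cylinder $S^{n-1} \times I$ into a collar of $\partial D^n$ inside $D^n$, sending the $B_1$ copy of $S^{n-1}$ homeomorphically onto $\partial D^n$; since the $G$-action on $W$ is trivial, this is just a standard reparametrization. Pulling $\widetilde{q}$ back along this homeomorphism yields a $G$-fibration over $D^n$ that restricts to $p$ on $\partial D^n$.

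The only genuinely substantive step is the construction of $\widetilde{q}$, which has been packaged into the equivariant Tulley theorem and its corollary; the remaining ingredients are the triviality of fibrations over contractible bases and a harmless homeomorphism of $D^n$ with a collared version of itself. Consequently I do not anticipate a real obstacle beyond making the collar identification $W \cong D^n$ precise.
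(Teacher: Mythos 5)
Your proposal is correct and follows essentially the same route as the paper: the ``only if'' direction via contractibility of $\mathbb{D}^n$ and Theorem \ref{classification theorem}, and the ``if'' direction by gluing $p$ to a trivial $G$-fibration over a disk using Corollary \ref{cor:genel}. The only difference is cosmetic: the paper parametrizes $\mathbb{D}^n$ as a cone $\mathbb{S}^{n-1}\times[0,2]/\sim$ so that the output of Corollary \ref{cor:genel} is literally the disk, whereas you perform the same identification afterwards by absorbing the cylinder into a collar of $\partial\mathbb{D}^n$.
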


\begin{proof} Since $\mathbb{D}^{n}$ is contractible, the ``only if"
part holds. For the ``if" part, let $p$ be $G$-fiber homotopy
equivalent to a trivial fibration. Consider $\mathbb{D}^n$ as the
cone of $\mathbb{S}^{n-1}$, that is, $\mathbb{D}^n=
\mathbb{S}^{n-1}\times [0,2]/ \sim $ where $(y,2)\sim \ast$. Let
$B_1= S^{n-1}\times[1,2]/ \sim$, and $B_2=B=\mathbb{S}^{n-1}\times
\{1\}$. Then $\varepsilon|_{B} \simeq p$ where
 $\varepsilon: B_1 \times F_G \rightarrow B_1$ where
$F_G=p^{-1}(x)$ for some $x \in \mathbb{S}^{n-1}$. By Corollary
\ref{cor:genel}, there is a $G$-fibration extending $p$.
\end{proof}

\begin{remark} In \cite{Conolly-Prassidis}, the
statement of Corollary \ref{maincor} appears on page 137 but in
there the total spaces were attached directly which results in a
$G$-quasifibration from which one gets a $G$-fibration by taking
the corresponding Hurewicz fibration.
\end{remark}

\section{Constructing $G$-fibrations}
\label{sect:constructing G-fibrations}

In this section, we introduce an obstruction theory for
constructing $G$-fibrations over $G$-CW-complexes and we prove
Theorem \ref{mainthmofourobstructiontheory}. An adequate
cohomology theory to develop such an obstruction theory is Bredon
cohomology. Let us first fix some notation for Bredon cohomology.
We refer the reader to \cite{BredonLN} and \cite{Luck} for more
detailed information about Bredon cohomology.

Let $G$ be a finite group and $\mathcal{H}$ be a family of
subgroups of $G$ which is closed under conjugation and taking
subgroups. The orbit category $\mathrm{Or}_{\mathcal{H}}(G)$
relative to the family $\mathcal{H}$ is defined as the category
whose objects are left cosets $G/H$ where $H \in \mathcal{H}$ and
whose morphisms are $G$-maps from $G/H$ to $G/K$. Recall that any
$a \in G$ with $H^a \leq K$ induces a $G$-map $\hat{a}:G/H
\rightarrow G/K$ defined by $\hat{a}(H)=aK$ and conversely, any
$G$-map from $G/H$ to $G/K$ is of this form.

Let $B$ be a $G$-CW-complex whose isotropy subgroups are all in
$\mathcal{H}$. A coefficient system for the Bredon cohomology is a
contravariant functor $M:\mathrm{Or}_{\mathcal{H}}(G) \rightarrow
\mathcal{A}b$ where $\mathcal{A}b$ is the category of abelian
groups. A morphism $T:M \rightarrow N$ between two coefficient
systems is a natural transformation of functors. Note that a
coefficient system is a
$\mathbb{Z}\mathrm{Or}_{\mathcal{H}}(G)$-module with the usual
definition of modules over a small category. Since the
$\mathbb{Z}\mathrm{Or}_{\mathcal{H}}(G)$-module category is an
abelian category, the usual notions for doing homological algebra
exist.

Given a local coefficient system $M:\mathrm{Or}_{\mathcal{H}}(G)
\rightarrow \mathcal{A}b$,  one defines the cochain complex
$C^{\ast}_{\mathcal{H}}(B; M)$ of $B$ with coefficients in $M$ as
follows: Let $C^n_{\mathcal{H}}(B; M)$ be the submodule of
$\oplus_{H\in \mathcal{H}}\mathrm{Hom}_{\mathbb{Z}}(C_n(B^H;
\mathbb{Z}), M(G/H))$ formed by elements $(f(H))_{H \in
\mathcal{H}}$ which makes the following diagram commute:
\begin{equation*}\begin{CD}
C_n(B^K; \mathbb{Z})@>f(K)>>M(G/K)\\
@V\bar{a}_{n}VV@VM(\widehat{a})VV \\C_n(B^H;
\mathbb{Z})@>f(H)>>M(G/H)
\end{CD}
\end{equation*}
\noindent for any $H,K \in \mathcal{H}$ with $H^a \leq K$. Here
$\bar{a}:B^K \rightarrow B^H$ is given by $\bar{a}(x)=ax$ for any
$x \in B^K$ and $\bar{a}_{\ast}$ denotes the induced map between
the chain complexes. The coboundary map
$\delta:C^n_{\mathcal{H}}(B; M) \rightarrow
C^{n+1}_{\mathcal{H}}(B; M)$ is defined by $(\delta f)(H)(\tau)=
f(H)(\partial \tau)$ for any $H\in \mathcal{H}$ and for any
$(n+1)$-cell $\tau$ of $B^H$.

\begin{defin} The Bredon cohomology
$H_{G, \mathcal{H}}^{\ast}(B; M)$ of $G$-CW-complex $B$ with
coefficients in $M$ is defined as the cohomology of the cochain
complex $C^{\ast}_{\mathcal{H}}(B; M)$.
\end{defin}

Let $\mathcal{F}=\{F_H\}_{H \in \mathcal{H}}$ be a compatible
family. Since $\mathcal{F}$ is compatible, we can consider the
universal $K$-fibration $u_K:E_K \rightarrow
B\mathrm{Aut}^I_K(F_K)$ with trivial $K$-action on the base as an
$H$-fibration with the fiber $F_H$ via conjugation action whenever
$H^a \leq K$. Let $\widetilde{a}: B\mathrm{Aut}^I_K(F_K)
\rightarrow B\mathrm{Aut}^I_H(F_H)$ be the classifying map of this
fibration. Define a contravariant functor $\underline{\pi}_{n,
\mathcal{F}}: \mathrm{Or}_{\mathcal{H}}(G) \rightarrow
\mathcal{A}b$ by letting\\ \label{local coef.}

\hspace{2cm} $\underline{\pi}_{n, \mathcal{F}}(G/H)=
\pi_n(B\mathrm{Aut}^I_H(F_H))$

\hspace{2cm}$ \underline{\pi}_{n, \mathcal{F}}(\hat{a}:G/H
\rightarrow G/K)= \pi_{\ast}(\widetilde{a}):
\pi_n(B\mathrm{Aut}^I_K(F_K)) \rightarrow
\pi_n(B\mathrm{Aut}^I_H(F_H))$.\\

From now on we assume that $B^H$ is simply connected for every $H
\in \mathrm{Iso}(B)$, where $\mathrm{Iso}(B)$ is the set of
isotropy subgroups of $B$. Let $p:E \rightarrow B_n$ be a
$G$-fibration over the $n$-skeleton of $B$ with the fiber type
$\mathcal{F}=\{F_H\}_{H\in \mathcal{H}}$ where $n \geq 2$. For
every $H \in \mathcal{H}$, the map $p_{H}: p^{-1}(B_n^H)
\rightarrow B^H_n$ is an $H$-fibration. In particular, the
$H$-fibration $p_H$ is classified by a map $\phi_{p, H}:B_n^H
\rightarrow B \mathrm{Aut}^I_HF_H$ when $H$ is an isotropy
subgroup. For an arbitrary $H \in \mathcal{H}$ and $(n+1)$-cell
$\sigma$ of $B^H_n$ with an attaching map $f_{\sigma}:
\mathbb{S}^{n} \rightarrow B_n^{G_{\sigma}} \subseteq B_n^H$, the
$G_{\sigma}$-fibration $f_{\sigma}^{\ast}(p_H)$ is classified by
the map $\mathrm{Res}_H^{G_{\sigma}} \circ \phi_{p,G_{\sigma}}
\circ f_{\sigma}$. Here, $\mathrm{Res}_H^{G_{\sigma}}: B
\mathrm{Aut}_{G_{\sigma}}^IF_{G_{\sigma}} \rightarrow B
\mathrm{Aut}^I_HF_H$ is induced by the relation $H \leq
G_{\sigma}$. When $H \in \mathrm{Iso(B)}$, the maps
$\mathrm{Res}_{H}^{G_{\sigma}}\circ \phi_{p,G_{\sigma}} \circ
f_{\sigma}$ and $\phi_{p, H} \circ f_{\sigma}$ are homotopic since
they both classify the $H$-fibration $f_{\sigma}^{\ast}(p_H)$.
Moreover, when $H,K \in \mathcal{H}$ with $H^a \leq K$, we have
$\widetilde{a}\circ \mathrm{Res}^{G_{\sigma}}_K \simeq
\mathrm{Res}^{G_{a\sigma}}_H \circ \widetilde{a}$ for every
$(n+1)$-cell $\sigma$ of $B^K$.

We define
$$\alpha_{p} \in \prod_{H \in \mathcal{H}}\mathrm{Hom}_{\mathbb{Z}}
(C_{n+1}(B^H), \pi_n(B \mathrm{Aut}^I_HF_H)) $$ by
$\alpha_{p}(H)(\sigma)=(\mathrm{Res}_{H}^{G_{\sigma}})_{\ast}[\phi_{p,G_{\sigma}}
\circ f_{\sigma}]$ for every $H \in \mathcal{H}$ and for every
$(n+1)$-cell $\sigma$ of $B^H$ with an attaching map
$f_{\sigma}:\mathbb{S}^n \rightarrow B_n^{G_{\sigma}}$. For
$\alpha_{p}$ to be a cochain, the following diagram must commute
up to homotopy

\begin{equation*}\begin{CD}
\mathbb{S}^n@>f_{\sigma}>>B_n^{G_{\sigma}}@>\phi_{p, G_{\sigma}}>>B \mathrm{Aut}^I_{G_{\sigma}}F_{G_{\sigma}}\\
@|@V\bar{a}VV@V\widetilde{a}VV\\
\mathbb{S}^n@>f_{a\sigma}>> B_n^{G_{a \sigma}}@>\phi_{p,
G_{a\sigma}}>> B \mathrm{Aut}^I_{G_{a \sigma}}F_{G_{a \sigma}}
\end{CD}
\end{equation*} for every $a \in
G$. The first square commutes because $B$ has a $G$-CW-complex
structure. Since $\widetilde{a}$ is the classifying map of the
$G_{a\sigma}$-fibration $u_{G_{\sigma}}$ and $\phi_{p,G_{\sigma}}$
is the classifying map of $p_{G_{\sigma}}$, the pullback of the
universal $G_{a \sigma}$-fibration $u_{G_{a \sigma}}$ by the
composition $\widetilde{a} \circ \phi_{p,G_{\sigma}}$ is
$G_{a\sigma}$-fiber homotopy equivalent to the fibration
$p_{G_{\sigma}}$ considered as an $G_{a\sigma}$-fibration. On the
other hand, the pullback of the $G_{a\sigma}$-fibration
$p_{G_{a\sigma}}$ by $\bar{a}$ is $G_{a \sigma}$-fiber homotopy
equivalent to the fibration $p_{G_{\sigma}}$ and hence $(\phi_{p,
G_{a\sigma}} \circ \bar{a})^{\ast}u_{G_{a\sigma}}
\simeq_{G_{a\sigma}}p_{G_{\sigma}}$. Therefore the
$G_{a\sigma}$-fibrations $(\widetilde{a} \circ
\phi_{p,G_{\sigma}})^{\ast} u_{G_{a\sigma}}$ and $(\phi_{p, G_{ a
\sigma}} \circ \bar{a})^{\ast}u_{G_{a\sigma}}$ are
$G_{a\sigma}$-fiber homotopy equivalent. By Theorem
\ref{classification theorem}, the maps $\widetilde{a} \circ
\phi_{p, G_{\sigma}}$ and $\phi_{p, G_{a\sigma}} \circ \bar{a}$
are homotopic and hence $\alpha_{p}$ is a cochain in
$C^{n+1}_{\mathcal{H}}(B, \underline{\pi}_{n, \mathcal{F}}).$
\begin{prop}\label{prop:cocycle} $\alpha_{p}$ is a cocycle.
\end{prop}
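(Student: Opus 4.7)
The plan is to fix $H\in\mathcal{H}$ and an $(n+2)$-cell $\tau$ of $B^H$, and verify the cocycle condition $\alpha_p(H)(\partial\tau)=0$ by reducing it to the classical fact that the obstruction cochain of a single non-equivariant map is a cocycle --- here applied to $\phi_{p,G_\tau}:(B^{G_\tau})^n\to B\mathrm{Aut}^I_{G_\tau}F_{G_\tau}$.

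The reduction rests on two observations. First, because $G_\tau$ is the isotropy of the open cell $\tau$, the attaching map of $\tau$ has image in $B^{G_\tau}$, so $\partial\tau\in C_{n+1}(B^H)$ is supported on $(n+1)$-cells $\sigma$ of $B^{G_\tau}$ (each satisfying $G_\tau\leq G_\sigma$) with incidence numbers $[\tau:\sigma]$ identical to those computed in $B^{G_\tau}$. Second, the restriction factors as $\mathrm{Res}_H^{G_\sigma}=\mathrm{Res}_H^{G_\tau}\circ\mathrm{Res}_{G_\tau}^{G_\sigma}$, and since $G_\tau\in\mathrm{Iso}(B)$ the compatibility noted just before the proposition gives the homotopy $\mathrm{Res}_{G_\tau}^{G_\sigma}\circ\phi_{p,G_\sigma}\circ f_\sigma\simeq\phi_{p,G_\tau}\circ f_\sigma$ (both maps classify the $G_\tau$-fibration $f_\sigma^\ast(p_{G_\tau})$, and Theorem \ref{classification theorem} applies by simple connectivity of $B^{G_\tau}$). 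Combining these yields
\[
\alpha_p(H)(\partial\tau)=(\mathrm{Res}_H^{G_\tau})_\ast\!\left(\sum_\sigma [\tau:\sigma]\,[\phi_{p,G_\tau}\circ f_\sigma]\right).
\]

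It then suffices to show the inner sum vanishes in $\pi_n(B\mathrm{Aut}^I_{G_\tau}F_{G_\tau})$, which is the standard assertion that the obstruction cochain $c(\sigma):=[\phi_{p,G_\tau}\circ f_\sigma]$ for extending $\phi_{p,G_\tau}$ skeletally is a cocycle. This follows from the long exact sequence of the pair $\bigl((B^{G_\tau})^{n+1},(B^{G_\tau})^n\bigr)$: the element $\partial^{\mathrm{cell}}\tau\in C_{n+1}(B^{G_\tau})\cong\pi_{n+1}\bigl((B^{G_\tau})^{n+1},(B^{G_\tau})^n\bigr)$ lies in the image of the inclusion-induced $j_\ast:\pi_{n+1}\bigl((B^{G_\tau})^{n+1}\bigr)\to\pi_{n+1}\bigl((B^{G_\tau})^{n+1},(B^{G_\tau})^n\bigr)$, so the connecting map $\partial:\pi_{n+1}\bigl((B^{G_\tau})^{n+1},(B^{G_\tau})^n\bigr)\to\pi_n\bigl((B^{G_\tau})^n\bigr)$ sends it to zero by exactness. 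Post-composing with $(\phi_{p,G_\tau})_\ast$ and then with $(\mathrm{Res}_H^{G_\tau})_\ast$ concludes the proof. The principal difficulty is really bookkeeping: verifying that the restriction maps on classifying spaces faithfully record the restrictions of the fibration $p$ under composition, which is exactly what Theorem \ref{classification theorem} together with the simple-connectivity hypothesis secure.
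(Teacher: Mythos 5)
Your argument is correct and is essentially the paper's own proof: both reduce the cocycle condition for arbitrary $H\in\mathcal{H}$ to the isotropy subgroup $G_\tau$ via the restriction maps (using that $\partial\tau$ is supported on cells of $B^{G_\tau}$ and that $\alpha_p(G_\tau)$ is the classical obstruction cochain for extending the classifying map $\phi_{p,G_\tau}$), and then invoke classical obstruction theory. The only difference is that you spell out the bookkeeping and the long-exact-sequence argument that the paper simply cites as ``classical obstruction theory,'' which is fine.
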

\begin{proof} For an $H \in \mathrm{Iso}(B)$, $\alpha_{p}(H) \in C^{n+1}
(B^H,\pi_n(B\mathrm{Aut}^I_HF_H))$ is the obstruction cochain for
extending the map $\phi_{p, H}:B_n^H \rightarrow B
\mathrm{Aut}^I_HF_H$ to the $(n+1)$-skeleton  of $B^H$. Therefore,
by classical obstruction theory, we have
$$(\delta \alpha_{p})(H)(\tau)=\delta(\alpha_p(H))(\tau)=0$$
for any $(n+2)$-cell $\tau$ of $B^H$. On the other hand, for
arbitrary $H \in \mathcal{H}$, we have $(\delta
\alpha_{p})(H)(\tau)= (\mathrm{Res}^{G_{\tau}}_H)_{\ast}
\big(\delta \alpha_p (G_{\tau})(\tau)\big)=0$. So, $\delta
\alpha_{p}=0$.
\end{proof}

We call $\alpha_{p}$ the {\it obstruction cocycle}. From now on we
assume that the total spaces of $G$-fibrations that we consider
have the homotopy type of a $G$-CW-complex unless otherwise
stated.

\begin{prop}\label{prop:1partofmain} A $G$-fibration $p:E_n \rightarrow B_n$ with the fiber type $\mathcal{F}=\{F_H\}_{H \in
\mathcal{H}}$, where $F_H$ is a finite $H$-CW-complex, extends to
a $G$-fibration over the (n+1)-skeleton $B_{n+1}$ if and only if
$\alpha_{p}=0$. Moreover if $E_n$ has the $G$-homotopy type of a
finite $G$-CW-complex then the total space of the fibration that
we obtain has the $G$-homotopy type of a finite $G$-CW-complex.
\end{prop}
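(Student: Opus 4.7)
For the ``only if'' direction, I would argue as follows. Suppose $p$ extends to a $G$-fibration $\tilde{p}: E_{n+1} \to B_{n+1}$. For each $H \in \mathcal{H}$ and each $(n+1)$-cell $\sigma$ of $B^H$, the characteristic map $\Phi_\sigma: \mathbb{D}^{n+1} \to B_{n+1}^{G_\sigma}$ extends the attaching map $f_\sigma$, and the pullback $\Phi_\sigma^\ast(\tilde{p}_{G_\sigma})$ is a $G_\sigma$-fibration over $\mathbb{D}^{n+1}$ whose restriction to $\mathbb{S}^n$ is $f_\sigma^\ast(p_{G_\sigma})$. By Corollary \ref{maincor}, $f_\sigma^\ast(p_{G_\sigma})$ is $G_\sigma$-fiber homotopy equivalent to a trivial fibration; by Theorem \ref{classification theorem} its classifying map $\phi_{p, G_\sigma} \circ f_\sigma$ is then null-homotopic, forcing $\alpha_p(H)(\sigma) = (\mathrm{Res}_H^{G_\sigma})_\ast[\phi_{p, G_\sigma} \circ f_\sigma] = 0$.

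For the ``if'' direction, I plan to extend $p$ one $G$-orbit of $(n+1)$-cells at a time. Fix a representative $\sigma$ with isotropy $G_\sigma$. The hypothesis $\alpha_p(G_\sigma)(\sigma) = [\phi_{p, G_\sigma} \circ f_\sigma] = 0$ combined with Theorem \ref{classification theorem} shows that $f_\sigma^\ast(p_{G_\sigma})$ is $G_\sigma$-fiber homotopy equivalent to the trivial $G_\sigma$-fibration $\mathbb{S}^n \times F_{G_\sigma} \to \mathbb{S}^n$. Corollary \ref{maincor} then produces a $G_\sigma$-fibration $q_\sigma: E_\sigma \to \mathbb{D}^{n+1}$ extending $f_\sigma^\ast(p_{G_\sigma})$. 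Inducing up yields a $G$-fibration $G \times_{G_\sigma} E_\sigma \to G \times_{G_\sigma} \mathbb{D}^{n+1}$ whose restriction to $G \times_{G_\sigma} \mathbb{S}^n$ is $G$-fiber homotopy equivalent to the pullback of $p$ along the orbit attaching map.

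To glue these pieces to $p$ I would apply Corollary \ref{cor:genel} with $B = G \times_{G_\sigma} \mathbb{S}^n$, $B_1 = B_n$, and $B_2 = G \times_{G_\sigma} \mathbb{D}^{n+1}$, obtaining a $G$-fibration over $B_n \cup_{i_1}(G \times_{G_\sigma} \mathbb{S}^n \times I) \cup_{i_2} G \times_{G_\sigma} \mathbb{D}^{n+1}$. Since the inserted cylinder can be absorbed into a collar of the boundary of the added disks, this pushout is $G$-homeomorphic to $B_n$ with the orbit of $\sigma$ attached. Performing this construction over all orbits of $(n+1)$-cells (independently, since distinct $(n+1)$-cells meet only in lower skeleta) yields a $G$-fibration over $B_{n+1}$ extending $p$.

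The main obstacle I expect is the reparameterization step in the gluing: Corollary \ref{cor:genel} produces a fibration over an enlarged pushout containing an extra cylinder $B \times I$ rather than directly over $B_{n+1}$, so some careful bookkeeping is required to absorb this cylinder into a collar of the new cells while preserving the fibration property. For the finiteness claim, each application of Corollary \ref{cor:genel} combined with Theorem \ref{mainthmfin} preserves having the $G$-homotopy type of a finite $G$-CW-complex, so when $B$ is finite (hence with only finitely many orbits of $(n+1)$-cells) and $E_n$ is of finite $G$-CW-type, the resulting $E_{n+1}$ is as well.
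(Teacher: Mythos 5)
Your argument follows the paper's own proof in all essentials: vanishing of $\alpha_p$ yields, via Theorem \ref{classification theorem} and Corollary \ref{maincor}, an extension of the pulled-back fibration over each $(n+1)$-disk, which is then glued to $p$ orbitwise by inserting a cylinder between the total spaces and checking the result locally with Theorem \ref{uniformization theorem}, while the finiteness claim is handled (as in the paper) by the fact that the new total space deforms rel $E_n$ onto $E_n$ with finitely many pieces $G\times_{G_\sigma}(\mathbb{D}^{n+1}\times F_{G_\sigma})$ attached. The only cosmetic differences are that you route the gluing through Corollary \ref{cor:genel} --- whose stated hypothesis $B\subseteq B_1\cap B_2$ must be relaxed to gluing along the (possibly non-injective) attaching map, which is exactly what the paper's direct construction of $E'$ does --- and that you spell out the ``only if'' direction, which the paper leaves implicit.
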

\begin{proof} If the obstruction cocycle is zero, then $[\phi_{G_{\sigma}}\circ f_{\sigma}]=0$ for any $(n+1)$-cell
$\sigma$ of $B$. By Theorem \ref{classification theorem}, 
the restriction $p|_{\partial \sigma}$ is $G_{\sigma}$-fiber
homotopy equivalent to the trivial $G_{\sigma}$-fibration
$\varepsilon: \partial \sigma \times F_{G_{\sigma}} \rightarrow
\partial \sigma$. Let $\beta_{\sigma}: \partial \sigma \times F_{G_{\sigma}} \rightarrow p^{-1}(\partial \sigma)$ be the
$G_{\sigma}$-fiber homotopy equivalence between them. By Corollary
\ref{maincor}, $p|_{\partial \sigma}$ extends to a
$G_{\sigma}$-fibration $\tilde{p}_{\sigma}: Z \rightarrow \sigma$
over $\sigma$. Let us define
$$E'=\Big(\coprod_{\sigma \in I_{n+1}} G
\times_{G_{\sigma}}  \big ( (Z \cup_{i_1} p^{-1}(\partial
\sigma)\times I ) \big) \Big) \cup_{i_2} E$$where $I_{n+1}$ is the
set of representatives of $G$-orbits of $(n+1)$-cells and $i_j$'s
are the corresponding inclusions for $j=1,2$. Let $q: E'
\rightarrow B_{n+1}$ be defined by relations
\begin{eqnarray*}q|_{Z}= \tilde{p}_{\sigma}, \
q|_{p^{-1}(\partial \sigma)\times I}=  p|_{\partial \sigma} \times
\mathrm{id} \ \mathrm{and} \ q|_E = p .
\end{eqnarray*}
By Theorem \ref{uniformization theorem}, the $G$-map $q$ is a
$G$-fibration.

Since all the fibers and the base space have the $G$-homotopy type
of a $G$-CW-complex, $E'$ is $G$-homotopy equivalent to a
$G$-CW-complex. More precisely, for each orbit representative
$\sigma \in I_{n+1}$, we can deform $G \times_{G_{\sigma}} \big( Z
\cup_{i_1} ( p^{-1}(\partial \sigma)\times I ) \big) \cup_{i_2} E$
to $ G \times_{G_{\sigma}} \big( \sigma \times F_{G_{\sigma}} \big
) \cup_{\beta_{\sigma}} E$ relative to $E$ via a $G$-map as shown
in Figure 1.
\begin{center}
\begin{tikzpicture}[node distance=0.7cm, auto]
 \draw [rounded corners=15pt,very thick] (-2,4.3)--(-4,4.5)--(-4.5,5.5)--(-3.8,6.6);
 \draw [rounded corners=15pt,very thick] (-2,7.8)--
 (-1,8.5)--(2,9)--(5.5,8.5)--(7,7)--(6.7,5)--(4,4.5)--(2,4.4)--(0,4)--(-2,4.3);
 \draw [rounded corners=10pt,thick] (-2.5,6)--(-3.5,6);
\draw [rounded corners=5pt,thick]
(-4,8)--(-4,7)--(-3.5,6)--(-2.5,6)--(-2,7)--(-2,8) ; \draw[thick]
(-4,7)--(-2,7); \draw[dashed] (-4,8)--(-3.5,8); \draw[dashed]
(-2.5,8)--(-2,8); \draw[thick]
(-3.5,8)--(-2.5,8)--(-2.5,8.5)--(-3.5,8.5)--cycle; \draw[thick]
(-2.5,8.5) arc (0:180:0.5cm and 0.7cm); \draw[->] (-3.6,7.8) to
(-3.6, 7.2); \draw[->] (-3.2,7.8) to (-3.2, 7.2); \draw[->]
(-2.8,7.8) to (-2.8, 7.2);\draw[->] (-2.4,7.8) to (-2.4, 7.2);
\draw[->,bend left,double] (-1.5,7.7) to (0.5,7.4); \draw[->,bend
left,double] (2,7) to (4, 6.5); \draw[rounded corners=5pt,thick]
(0,6.5)--(0.5,5.5)--(1.5,5.5)--(2,6.5); \draw[thick]
(2,6.5)--(0,6.5); \draw[thick] (1.5,6.5) arc (0:180:0.5cm and
0.8cm); \draw [rounded corners=10pt,thick] (0.5,5.5)--(1.5,5.5);
\draw[->] (0.6,6.3) to (0.6,5.7); \draw[->] (1,6.3) to (1,5.7);
\draw[->] (1.4,6.3) to (1.4,5.7);\draw[thick] (5,5.5) arc
(0:180:0.6cm and 0.8cm); \draw[thick] (5,5.5) parabola bend
(4.4,5.3) (3.8,5.5); \draw[dashed] (5,5.5) parabola bend (4.4,5.7)
(3.8,5.5); \path (1.5, 3.3) node (Z) {$\mathrm{Figure \ 1}$};
\end{tikzpicture}
\end{center}
\end{proof}

Proposition \ref{prop:1partofmain} proves the first part of
Theorem \ref{mainthmofourobstructiontheory}. The second part of
the theorem says that if $\alpha_{p}$ is cohomologous to zero,
that is, $\alpha_p= \delta d$ for some cochain $d \in
C^n_{\mathcal{H}}(B, \underline{\pi}_{n, \mathcal{F}})$ then the
$G$-fibration $p|_{B_{n-1}}:p^{-1}(B_{n-1}) \rightarrow B_{n-1}$
extends to a $G$-fibration over $B_{n+1}$. In order to prove this,
we redefine $p$ over the $n$-skeleton relative to the
$(n-1)$-skeleton in such a way that the obstruction cocycle of
this new $G$-fibration vanishes.

Let $p_1$ and $p_2$ be $G$-fibrations over $B_n$ whose
restrictions to $B_{n-1}$ are $G$-fiber homotopy equivalent. Then
by Corollary \ref{cor:genel}, there is a $G$-fibration $q :Z
\rightarrow (B \times I)_n$ with $q|_{B_n \times \{0\}}=p_1$ and
$q|_{B_n \times \{1\}}= p_2$. Let $\Psi_{q,H}: (B^H \times I)_n
\rightarrow B \mathrm{Aut}^I_HF_H$ be the classifying map of the
fibration $q_H$ for $H \in \mathrm{Iso}(B)$. Note that the
composition $\Psi_{q,H} \circ i_j$ gives a classifying map for the
$H$-fibration $p_j^H$ where $i_j$'s are the corresponding
inclusions. As before, the map $\mathrm{Res}^{G_{ae}}_H \circ
\Psi_{q,G_{ae}} \circ f_{ae}$ is homotopic to $\tilde{a}\circ
\mathrm{Res}^{G_e}_K \circ \Psi_{q,G_e}\circ f_e$ for every
$(n+1)$-cell $e$ of $B^K\times I$ with the attaching map $f_e:
\mathbb{S}^{n-1} \rightarrow B^{G_{\sigma}}_n$ and for every $H^a
\leq K$. Therefore, the map $d_{p_1, q, p_2} \in \prod_{H \in
\mathcal{H}}\mathrm{Hom}_{\mathbb{Z}}(C_n(B^H), \pi_n(B
\mathrm{Aut}^I_HF_H))$ defined for an $n$-cell $\tau$ of $B^H$ by
$$d_{p_1, q , p_2}(H)(\tau)=(-1)^{n+1}(\mathrm{Res}^{G_{\tau}}_H)_{\ast}[\Psi_{q,G_{\tau}} \circ f_{\tau \times
I}],$$ is a cochain in $C^n_{\mathcal{H}}(B; \underline{\pi}_{n,
\mathcal{F}})$. We call $d_{p_1, q, p_2}$ the {\it difference
cochain}. As in the standard theory, we have the following.

\begin{prop} $\delta d_{p_1, q, p_2}= \alpha_{p_1}-\alpha_{p_2}.$
\end{prop}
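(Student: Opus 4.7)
The plan is to establish the cochain identity cell-by-cell by applying the obstruction cocycle of $q$ to the product CW-structure on $B \times I$. Fix $H \in \mathcal{H}$ and an $(n+1)$-cell $\sigma$ of $B^H$. Since $q: Z \rightarrow (B \times I)_n$ is a $G$-fibration over the $n$-skeleton of $B \times I$ with the fiber type $\mathcal{F}$ and all isotropy subgroups in $\mathcal{H}$, Proposition \ref{prop:cocycle} applies to give an obstruction cocycle $\alpha_q \in C^{n+1}_{\mathcal{H}}(B \times I; \underline{\pi}_{n, \mathcal{F}})$ satisfying $\delta \alpha_q = 0$. The strategy is to relate $\alpha_q$ on the three types of $(n+1)$-cells of $B^H \times I$ — namely $\sigma' \times \{0\}$ and $\sigma' \times \{1\}$ for $(n+1)$-cells $\sigma'$ of $B^H$, and $\tau \times I$ for $n$-cells $\tau$ of $B^H$ — to the three cochains in play, and then to exploit the cocycle condition on the single $(n+2)$-cell $\sigma \times I$.

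I would first identify $\alpha_q$ on each type of cell. Because the $G$-action on $I$ is trivial, $G_{\sigma' \times \{j\}} = G_{\sigma'}$ and $G_{\tau \times I} = G_\tau$, and the attaching maps of these product cells factor through the corresponding attaching maps of the cells of $B$. Since $q|_{B_n \times \{j\}} = p_{j+1}$ for $j=0,1$, the restriction of $\Psi_{q, G_{\sigma'}}$ to $B^{G_{\sigma'}}_n \times \{j\}$ classifies $p_{j+1}|_{B^{G_{\sigma'}}_n}$ and is therefore homotopic to $\phi_{p_{j+1}, G_{\sigma'}}$ by Theorem \ref{classification theorem}. This gives $\alpha_q(H)(\sigma' \times \{0\}) = \alpha_{p_1}(H)(\sigma')$ and $\alpha_q(H)(\sigma' \times \{1\}) = \alpha_{p_2}(H)(\sigma')$, while the definition of the difference cochain immediately yields $\alpha_q(H)(\tau \times I) = (-1)^{n+1}\, d_{p_1, q, p_2}(H)(\tau)$.

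Finally, I would apply the cocycle condition $\delta \alpha_q = 0$ to the $(n+2)$-cell $\sigma \times I$ of $B^H \times I$. The product boundary formula $\partial(\sigma \times I) = \partial \sigma \times I + (-1)^{n+1}(\sigma \times \{1\} - \sigma \times \{0\})$ combined with the identifications above gives
\[
0 = \alpha_q(H)(\partial \sigma \times I) + (-1)^{n+1}\bigl(\alpha_{p_2}(H)(\sigma) - \alpha_{p_1}(H)(\sigma)\bigr).
\]
Expanding $\partial \sigma$ as a linear combination of $n$-cells and applying the formula $\alpha_q(H)(\tau \times I) = (-1)^{n+1} d_{p_1, q, p_2}(H)(\tau)$ term-by-term gives $\alpha_q(H)(\partial \sigma \times I) = (-1)^{n+1}(\delta d_{p_1, q, p_2})(H)(\sigma)$, and cancelling the common factor $(-1)^{n+1}$ produces
\[
(\delta d_{p_1, q, p_2})(H)(\sigma) = \alpha_{p_1}(H)(\sigma) - \alpha_{p_2}(H)(\sigma),
\]
which is the desired equality.

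The main obstacle is the sign bookkeeping: the factor $(-1)^{n+1}$ in the definition of the difference cochain is chosen precisely so that the identity comes out with the standard sign convention, and one has to be careful with the orientation of $\partial I = \{1\} - \{0\}$ in the product boundary formula. A secondary technical point requiring care is the verification that $\Psi_{q, G_{\sigma'}}$ does restrict, up to homotopy, to $\phi_{p_{j+1}, G_{\sigma'}}$ on the slice $B^{G_{\sigma'}} \times \{j\}$; this is where the uniqueness of classifying maps in Theorem \ref{classification theorem} is used together with the hypothesis $q|_{B_n \times \{j\}} = p_{j+1}$.
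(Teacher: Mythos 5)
Your proof is correct and follows essentially the same route as the paper: the paper evaluates the obstruction cocycle $o_{\Psi}$ for extending the classifying map $\Psi_{q,H}$ over $(B^H\times I)_{n+1}$ on the $(n+2)$-cell $\sigma\times I$ and uses the product boundary formula exactly as you do, the only cosmetic difference being that you phrase this as the Bredon obstruction cocycle $\alpha_q$ of the fibration $q$ via Proposition \ref{prop:cocycle} rather than as the per-$H$ classical obstruction cochain of its classifying map, which amounts to the same computation.
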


\begin{proof} It suffices to show that $d_{p_1,q,p_2}(H)=\alpha_{p_1}(H)-\alpha_{p_2}(H)$
for every $H \in \mathrm{Iso}(B)$. Let $o_{\Psi} \in C^{n+1}(B^H
\times I, \pi_n(B\mathrm{Aut}^I_HF_H))$ be the obstruction cocycle
to the extension of $\Psi_{q,H}$ to $(B^H \times I)_{n+1}$. Then
$\delta o_{\Psi}=0$. On the other hand, $o_{\Psi}(H)(\theta)=
[\Psi_{q,H} \circ f_{\theta}]$ for every $(n+1)$-cell $\theta$ of
$B^H \times I$. Therefore, as in the proof of the corresponding
result in the standard theory, we have
\begin{eqnarray*} 0&=&(\delta o_{\Psi })(H)(\sigma \times
I)\\ &=& o_{\Psi }(H)(\partial \sigma \times
I)+(-1)^{n+1}\big(o_{\Psi}(H)(\sigma
\times \{1\})-o_{\Psi}(H)(\sigma \times \{0\}) \big)\\
&=&[\Psi_{q,H} \circ f_{\partial \sigma \times I}]+(-1)^{n+1}
\big([\Psi_{q,H} \circ f_{\sigma \times \{1\}}]-[\Psi_{q,H} \circ
f_{\sigma \times
\{0\}}]\big)\\
&=&(-1)^{n+1}(d_{p_1, q, p_2}(H)(\partial \sigma)+\alpha_{p_2}(H)(\sigma)-\alpha_{p_1}(H)(\sigma))\\
&=& (-1)^{n+1}(\delta d_{p_1, q,
p_2}(H)(\sigma)+\alpha_{p_2}(H)(\sigma)-\alpha_{p_1}(H)(\sigma))
\end{eqnarray*} for any $(n+1)$-cell $\sigma$ of $B^H$. This implies that
for every $H \in G$ and for every $(n+1)-$cell $\sigma$ of $B^H$,
we have
$$\delta d_{p_1, q, p_2}(H)(\sigma)=(\alpha_{p_1}-
\alpha_{p_2})(H)(\sigma)$$ and hence $\delta d_{p_1, q,
p_2}=\alpha_{p_1}- \alpha_{p_2}$. \end{proof}

The above proposition immediately implies the ``if" direction of
the second statement in Theorem
\ref{mainthmofourobstructiontheory}. To see this, note that if the
$G$-fibration $p|_{B_{n-1}}$ extends to a $G$-fibration
$\tilde{p}$ over $B_{n+1}$ then $\delta d_{p, q,
\tilde{p}|_{B_{n}}}= \alpha_p -
\underbrace{\alpha_{\tilde{p}|_{B_n}}}_0= \alpha_p$ hence the
cohomology class $[\alpha_p]$ vanishes. For the ``only if"
direction, we need the following observation.

\begin{prop} \label{differenced}Let $p:E \rightarrow B_n$ be a $G$-fibration over
$B_n$ with the fiber type $\mathcal{F}=\{F_H\}_{H \in
\mathcal{H}}$ where $F_H$ is a finite $H$-CW-complex for every $H
\in \mathcal{H}$. Then for every $d \in C^n_{\mathcal{H}}(B;
\underline{\pi}_{n, \mathcal{F}})$, there is a $G$-fibration $q: Z
\rightarrow (B \times I)_n$ such that $d_{p, q ,\tilde{p}}=d$
where $\tilde{p}=q|_{B_n \times \{1\}}$. Moreover if $E$ has the
$G$-homotopy type of a finite $G$-CW-complex then the space
$q^{-1}(B_n \times \{1\})$ has the $G$-homotopy type of a finite
$G$-CW-complex.
\end{prop}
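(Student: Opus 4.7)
The strategy is to construct $q$ over the closed decomposition $(B \times I)_n = (B_n \times \{0\}) \cup (B_{n-1} \times I) \cup (B_n \times \{1\})$, setting $q = p$ on the bottom face, the trivial tube $q = p|_{B_{n-1}} \times \mathrm{id}_I$ on the middle cylinder, and a new $G$-fibration $\tilde p$ over $B_n$ on the top face. The cochain $d$ enters only through the choice of $\tilde p$: its restriction to each top $n$-cell $\sigma$ will be designed so that its relative classifying datum with respect to $p|_\sigma$ is $\pm d(G_\sigma)(\sigma)$.

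To build $\tilde p$, I would work one $G$-orbit of $n$-cells at a time. Fix an orbit representative $\sigma$ with isotropy $G_\sigma$ and let $\phi_\sigma: \sigma \to B\mathrm{Aut}^I_{G_\sigma}(F_{G_\sigma})$ be a classifying map for $p|_\sigma$, provided by Theorem \ref{classification theorem}; since $\sigma$ is contractible, $\phi_\sigma$ is null-homotopic relative to a basepoint in $\partial \sigma$. Picking a representative $g_\sigma: \mathbb{S}^n \to B\mathrm{Aut}^I_{G_\sigma}(F_{G_\sigma})$ of $\pm d(G_\sigma)(\sigma)$ and modifying $\phi_\sigma$ on a small interior sub-disk of $\sigma$ by $g_\sigma$ (the standard cochain-realizing move in obstruction theory), I obtain $\tilde \phi_\sigma: \sigma \to B\mathrm{Aut}^I_{G_\sigma}(F_{G_\sigma})$ that agrees with $\phi_\sigma$ on $\partial \sigma$ and represents $\pm d(G_\sigma)(\sigma)$ when passed to $\sigma/\partial \sigma = \mathbb{S}^n$. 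Pulling back the universal $G_\sigma$-fibration along $\tilde \phi_\sigma$ gives a $G_\sigma$-fibration over $\sigma$ whose restriction to $\partial \sigma$ is only $G_\sigma$-fiber homotopy equivalent to $p|_{\partial \sigma}$; to rigidify, I apply Corollary \ref{cor:genel} and absorb the connecting $G_\sigma$-tube into a collar neighborhood of $\partial \sigma$ inside $\sigma$. This yields a $G_\sigma$-fibration $\tilde p_\sigma$ over $\sigma$ with $\tilde p_\sigma|_{\partial \sigma} = p|_{\partial \sigma}$ on the nose. Translating by the $G$-action over the orbit and assembling with $p|_{B_{n-1}}$ produces the required $\tilde p$ over $B_n$.

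With $q$ set as above the three closed pieces agree on their overlaps (each restricts to $p|_{B_{n-1}}$), so $q$ is a well-defined $G$-map; that it is a $G$-fibration follows from Theorem \ref{uniformization theorem} applied to the numerable $G$-cover $\{(B \times [0,2/3)) \cap (B \times I)_n, \ (B \times (1/3,1]) \cap (B \times I)_n\}$, each piece being a fibration by a mapping-cylinder/strong $G$-deformation retract argument as in Lemma \ref{lem:mapping cylinder} and Proposition \ref{sdr-sfhe}. For the identity $d_{p, q, \tilde p} = d$ one observes that on the sphere $\partial(\sigma \times I) = \mathbb{S}^n$ the classifying map $\Psi_{q, G_\sigma} \circ f_{\sigma \times I}$ is null-homotopic on the hemispheres $\sigma \times \{0\}$ and on the equatorial cylinder $\partial \sigma \times I$ (contractible base and trivial tube, respectively), so the total homotopy class picks up only the contribution from $\tilde p_\sigma$ on $\sigma \times \{1\}$, which by construction is $\pm d(G_\sigma)(\sigma)$; matching the $(-1)^{n+1}$ in the definition of the difference cochain fixes the sign in our choice of $g_\sigma$. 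The finiteness assertion for $q^{-1}(B_n \times \{1\})$ follows from finiteness of each $F_{G_\sigma}$ together with the fact that only finitely many orbits of $n$-cells occur in a finite $G$-CW-complex. The main obstacle is the on-the-nose rigidification $\tilde p_\sigma|_{\partial \sigma} = p|_{\partial \sigma}$ needed to glue the three closed pieces over $(B \times I)_n$ itself rather than over an enlarged base with auxiliary tubes; this is precisely where Corollary \ref{cor:genel} becomes indispensable.
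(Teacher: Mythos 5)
Your proposal is correct in outline and follows the same global strategy as the paper: decompose $(B\times I)_n$ into $p$ on the bottom, the product $p|_{B_{n-1}}\times \mathrm{id}$ over $B_{n-1}\times I$, and a new fibration $\tilde p$ on top, built orbit-by-orbit by realizing $d(G_\sigma)(\sigma)$ on an interior subdisk of each $n$-cell through the universal $G_\sigma$-fibration, and then invoke Theorem \ref{uniformization theorem}. Where you genuinely differ is in how the top-cell fibration is made to agree with $p$ along the boundary: your classifying-map modification only yields agreement up to $G_\sigma$-fiber homotopy equivalence, which you then rigidify by Corollary \ref{cor:genel}, absorbing the resulting Tulley tube into a boundary collar. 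The paper instead arranges all gluings to be strict from the start: over each top disk it uses the decomposition $X_1\cup X_2\cup X_3$, placing on $X_1$ the pullback of $\bar p_\tau$ along an explicit stretching map (which is literally the trivial fibration with fiber $p^{-1}(f_\tau(0))$ on $\mathbb{S}^{n-1}_{1/2}$), the trivial fibration on $X_2$, and $\Psi_\tau^{\ast}(u'_{G_\tau})$ on $X_3$; tubes enter only as product collars $p^{-1}(\partial\tau)\times I$ in the assembly over $B_n$, exactly as in Proposition \ref{prop:1partofmain}. Your route is shorter because it reuses the gluing machinery of Section \ref{sect:Tulley'stheorem} (and note that the corollary's construction automatically leaves a product collar near $\partial\sigma$, which is what makes the uniformization step over $B_n$ go through); the paper's explicit model buys a transparent verification of $d_{p,q,\tilde p}=d$, since its classifying map $\widetilde\Psi$ is constant on $X_2$, equals $\Psi_\tau$ on $X_3$, and is null-homotopic \emph{relative to} $\mathbb{S}^{n-1}_{1/2}$ on the remaining piece. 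In your version this last verification needs the same care: observing that the classifying map is null-homotopic on $\sigma\times\{0\}$ and on $\partial\sigma\times I$ separately does not by itself isolate the contribution of $g_\sigma$; the null-homotopies must be taken rel the common overlaps (or one argues as the paper does) before concluding that the class of $\Psi_{q,G_\sigma}\circ f_{\sigma\times I}$ is $\pm d(G_\sigma)(\sigma)$. Finally, for the finiteness assertion, the hypothesis is that $E$ (not $B$) has the finite $G$-homotopy type; the relevant point is that each top-cell piece is assembled from the finite complex $F_{G_\tau}$ over a disk together with pieces of $E$ and product collars, so $q^{-1}(B_n\times\{1\})$ inherits the finite $G$-homotopy type, rather than an appeal to finitely many orbits of $n$-cells.
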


\begin{proof} For an $n$-cell $\tau$ of $B$,
the $G_{\tau}$-map $\bar{p}_{\tau}:p^{-1}(\bar{\tau}) \cup
p^{-1}(\partial \tau) \times I \rightarrow \bar{\tau} \cup
\partial \tau \times I$, where $\bar{p}_{\tau}|_{p^{-1}(\bar{\tau})}=p|_{p^{-1}(\bar{\tau})}$
and $\bar{p}_{\tau}|_{p^{-1}(\partial \tau) \times I}=
p|_{p^{-1}(\partial \tau)}\times \mathrm{id}$, is a
$G_{\tau}$-fibration and it is classified by the map $\phi_{p,
G_{\tau}} \circ \pi_1$ where $\pi_1: \bar{\tau} \cup
\partial \tau \times I \rightarrow \bar{\tau}$ is the projection to the first coordinate. Let $E_{\tau}$ be the
pullback of $\bar{p}_{\tau}$ by the map $$\bar{f}_{\tau} =
(f_{\tau} \circ \pi_1, f_{\tau} \times \mathrm{id}): \mathbb{D}^n
\times \{0\} \cup \mathbb{S}^{n-1} \times I \rightarrow \bar{\tau}
\cup
\partial \tau \times I$$ where $f_{\tau}:(\mathbb{D}^n, \mathbb{S}^{n-1})
\rightarrow (\bar{\tau}, \tau)$ is the characteristic map of
$\tau$.

Let $X_1=\{(x,1) \in \mathbb{D}^n \times \{1\}| \ \frac{1}{2} \leq
|x| \leq 1\}$, $X_2= \{(x,1) \in \mathbb{D}^n \times \{1\}| \
\frac{1}{4} \leq |x| \leq \frac{1}{2}\}$, and $X_3=\{(x,1) \in
\mathbb{D}^n \times \{1\}| \ 0 \leq |x| \leq \frac{1}{4}\}$. Let
$p_1: E^1_{\tau} \rightarrow X_1$ be the induced
$G_{\tau}$-fibration $(\bar{f}_{\tau} f)^{\ast}(\bar{p}_{\tau})$
where $f: X_1 \rightarrow \mathbb{D}^n \times \{0\} \cup
\mathbb{S}^{n-1} \times
I$ is given by \begin{eqnarray*} f(x,1)= \left\{%
\begin{array}{ll}
(2x- \frac{x}{|x|}, \ 4|x|-3), & \hbox{$\frac{3}{4} \leq |x| \leq 1$,} \\
(2x- \frac{x}{|x|}, \ 0), & \hbox{$ \frac{1}{2} \leq |x| \leq \frac{3}{4}$.} \\
\end{array}%
\right.
\end{eqnarray*} Note that $p_1|_{\mathbb{S}^{n-1}_1 \times \{1\}}=
f_{\tau}^{\ast}(\bar{p}|_{\partial \tau})$ and
$p_1|_{\mathbb{S}^{n-1}_{\frac{1}{2}} \times \{1\}}$ is the
trivial $G_{\tau}$-fibration with the fiber
$F=p^{-1}(f_{\tau}(0))$, where $\mathbb{S}^{n-1}_r$ is the
$(n-1)$-sphere of radius $r$.

Since $d(G_{\tau})(\tau) \in \pi_n(B
\mathrm{Aut}_{G_{\tau}}F_{G_{\tau}})$, it is represented by a map
$$\Psi_{\tau}: (\mathbb{D}^n_{\frac{1}{4}}, \mathbb{S}^{n-1}_{\frac{1}{4}}) \rightarrow (B
\mathrm{Aut}_{G_{\tau}}F_{G_{\tau}}, \ast).$$ Let $u_{G_{\tau}}'=
E \mathrm{Aut_{G_{\tau}}}F_{G_{\tau}}
\times_{\mathrm{Aut}_{G_{\tau}}F_{G_{\tau}}} F \rightarrow B
\mathrm{Aut}_{G_{\tau}}F_{G_{\tau}}$. Then the restriction of the
$G_{\tau}$-fibration $\Psi_{\tau}^{\ast}(u'_{G_{\tau}}):
E_{\tau}^2 \rightarrow X_3$ to $\mathbb{S}^{n-1}_{\frac{1}{4}}
\times \{1\}$ is the same as the trivial $G_{\tau}$-fibration with
the fiber $F$. By gluing these fibration with the trivial one over
$X_2$, we obtain a $G_{\tau}$-fibration $E_{\tau}^1 \cup F \times
X_2 \cup E^2_{\tau} \rightarrow \mathbb{D}^n \times \{1\}$ over
$\mathbb{D}^n \times \{1\}$. Let $\widetilde{p}_{\tau}:
\widetilde{E}_{\tau} \rightarrow \tau$ be the corresponding
$G_{\tau}$-fibration over $\tau$. As in the proof of Proposition
\ref{prop:1partofmain}, the following $G$-map
\begin{equation*}\begin{CD}  \widetilde{E}=\Big(\coprod_{\tau \in I_{n}} G
\times_{G_{\tau}}  \big (\widetilde{E}_{\tau} \cup_{i_1}
p^{-1}(\partial
\tau)\times I  \big) \Big) \cup_{i_2} p^{-1}(B_{n-1}) \\
@V\tilde{p}VV\\
B_n
\end{CD}
\end{equation*} is a $G$-fibration over $B_n$. Moreover, when $E$
has the $G$-homotopy type of a $G$-CW-complex, so does
$\widetilde{E}$.

Let $q: E \cup ( p^{-1}(B_{n-1})\times I) \cup \widetilde{E}
\rightarrow (B \times I)_n$ be the $G$-fibration defined by
$q|_E=p$, $q|_{\widetilde{E}}=\widetilde{p}$, and
$q|_{p^{-1}(B_{n-1})\times I}= p|_{B_{n-1}} \times \mathrm{id}$.
Then $d_{p,q,\tilde{p}}(G_{\tau})(\tau)$ is represented by the
classifying map $$\widetilde{\Psi}: \mathbb{D}^n \times \{0\}\cup
\mathbb{S}^{n-1} \times I \cup X_1 \cup X_2 \cup X_3 \rightarrow B
\mathrm{Aut}_{G_{\tau}}F_{G_{\tau}}$$ where
\begin{eqnarray*} \widetilde{\Psi}|_{\mathbb{D}^n \times \{0\} \cup \ \mathbb{S}^{n-1}\times
I}&=&\phi_{p, G_{\tau}} \pi_1 \bar{f}_{\tau}, \ \ \
\widetilde{\Psi}|_{X_1}= \phi_{p,G_{\tau}}
\pi_1  \bar{f}_{\tau}  f \\
\widetilde{\Psi}|_{X_2}&=& c_{\phi_{p, G_{\tau}}(f_{\tau}(0))}, \
\ \widetilde{\Psi}|_{X_3}=\Psi_{\tau}.
\end{eqnarray*} Here, $c_{\phi_{p, G_{\tau}}(f_{\tau}(0))}$ is the
constant map at $\phi_{p, G_{\tau}}(f_{\tau}(0))$. Since
$\widetilde{\Psi}|_{\mathbb{D}^n \times \{0\}\cup \mathbb{S}^{n-1}
\times I \cup X_1}$ is homotopic to the constant map $c_{\phi_{p,
G_{\tau}}(f_{\tau}(0))}$ relative to
$\mathbb{S}^{n-1}_{\frac{1}{2}} \times \{1\}$, the map
$\widetilde{\Psi}$ also represents $d(G_{\tau})(\tau)$. Therefore,
we have $d= d_{p,q, \tilde{p}}$.
\end{proof}

Now we can prove the ``only if" of the main theorem as follows.

\begin{proofmain} It only remains to show that if $\alpha_p$ is
cohomologous to zero then there is a $G$-fibration over $B_{n+1}$
which extends $p|_{B_{n-1}}$. Let $\alpha_p= \delta d$ for some $d
\in \mathrm{Hom}(C_n(B),\underline{\pi}_{n,\mathcal{F}})$. By
Proposition \ref{differenced}, there is a $G$-fibration $q$ over
$B\times I$ such that $d=d_{p,q,\tilde{p}}$ where
$\tilde{p}=q|_{B_{n}\times \{1\}}$. Since $\alpha_p=\delta d =
\alpha_p- \alpha_{\tilde{p}}$, we have $\alpha_{\tilde{p}}=0$ and
hence $\tilde{p}$ extends to a $G$-fibration over $B_{n+1}$.
\end{proofmain}

\begin{remark} In Theorem \ref{mainthmofourobstructiontheory}, one
can replace the assumption that $B^H$ is simply-connected for
every $H \in \mathcal{H}$ with the assumption that the map
$\pi_1(B^H) \rightarrow [F_H, F_H]_H$ is trivial for every $H \in
\mathcal{H}$. In applications, one often has fibers which are
homotopy equivalent to spheres and one can take fiber joins to
make this map trivial.
\end{remark}

\subsection*{Acknowledgements} This work is part of the author's PhD thesis at the
Bilkent University. The author is grateful to her thesis advisor
Erg\"{u}n Yal\c{c}\i n for introducing her to this problem, for
valuable discussions and for the careful reading of the first
draft. The author thanks \"Ozg\"un \"Unl\"u for his crucial
comments on this work. We also thank the referee for helpful
comments, in particular, for suggesting a simpler map which
shortens the proof of Lemma \ref{lem:mapping cylinder}.


\begin{thebibliography}{EMG}

\bibitem{Main}
A.~Adem and J.~H.~Smith, {\it Periodic complexes and group
actions}, Ann. of Math. 154 (2001), 407-435.

\bibitem{BredonLN}
G.~Bredon, {\it Equivariant Cohomology Theories}, Lecture Notes in
Mathematics 34, Spring-Verlag, 1967.

\bibitem{Conolly-Prassidis}
F.~Connolly and S.~Prassidis, {\it Groups which act freely on
$\mathrm{R}^m \times \mathbb{S}^{n-1}$}, Topology 28 (1989),
133--148.


\bibitem{Dold}
A.~Dold, {\it Partitions of unity in the theory of fibrations},
Ann. of Math. 78 (1963), 223--255.

\bibitem{Guclukan}
A.~G\"{u}\c{c}l\"{u}kan, {\it Obstructions for constructing
$G$-equivariant fibrations}, PhD thesis (2011).



\bibitem{Klaus}
M.~Klaus, {\it Constructing actions of $p$-groups on product of
spheres}, preprint, arXiv:1011.1274v1.

\bibitem{Langston}
S.~Langston, {\it Replacement and extension theorems in the theory
of Hurewicz fiber spaces}, Ph.D. Thesis (1968).

\bibitem{Luck}
W.~L\"{u}ck, {\it Transformation Groups and Algebraic K-theory},
Lecture Notes in Mathematics 1408, Spring-Verlag, 1989.

\bibitem{Madsen}
I.~Madsen, C.~B.~Thomas, and C~.T~.Wall, {\it The topological
spherical space form problem II}, Topology 15 (1978), 375--382.

\bibitem{May}
J.~P.~May, {\it A Concise Course in Algebraic Topology}, Univ.
Chicago Press, 1999.

\bibitem{Piccinini}
R.~A.~Piccinini, {\it Lectures on Homotopy Theory}, North-Holland,
1992.

\bibitem{Stasheff}
J.~Stasheff, {\it A classification theorem for fiber spaces},
Topology 2 (1963), 239--246.

\bibitem{Swan}
R.~G.~Swan, {\it Periodic resolutions for finite groups}, Ann. of
Math. 72 (1960), 267--291.

\bibitem{Steimle}
W.~Steimle, {\em Higher Whitehead torsion and the geometric
assembly map}, preprint, {arXiv:1105.2116v1}.

\bibitem{Tulley}
P.~Tulley McAuley, {\it A strong homotopy equivalence and
extensions for Hurewicz fibrations}, Duke Math. J. 36 (1969) ,
609--619.

\bibitem{Tulley2}
P.~Tulley McAuley, {\it A note on paired fibrations}, Proc. Amer.
Math. Soc. 34 (2) (1972), 534--540.

\bibitem{Unlu-thesis}
\"{O}.~\"{U}nl\"{u}, {\it Constructions of free group actions on
products of spheres}, PhD. thesis (2004).

\bibitem{Waner}
S.~Waner, {\it Equivariant fibrations and transfer}, Trans. Amer.
Math. Soc. 258 (1980), 369--384.

\bibitem{WanerClass}
S.~Waner, {\it Equivariant classifying spaces and fibrations},
Trans. Amer. Math. Soc. 258 (1980), 385--405.
\end{thebibliography}
\end{document}